\newtheorem{theorem}{Theorem}[section]
\newtheorem{corollary}[theorem]{Corollary}
\newtheorem{lemma}[theorem]{Lemma}
\newtheorem{example}[theorem]{Example}
\newtheorem{proposition}[theorem]{Proposition}
\newtheorem{remark}[theorem]{Remark}
\begin{document}

\title{Quasi-modularity  and holomorphic anomaly equation for the twisted Gromov-Witten theory: $\mathcal{O}(3)$ over $\mathbb{P}^2$}
\author{Xin Wang}
%\address{Department of Mathematics\\ University of Michigan \\ Ann Arbor, Michigan, USA}
\address{Department of Mathematics\\ Shandong University \\ Jinan, China}

\email{xinwmath@gmail.com}
\begin{abstract}
In this paper, we prove quasi-modularity property for the twisted Gromov-Witten  theory of  $\mathcal{O}(3)$ over $\mathbb{P}^2$. Meanwhile, we derive its holomorphic anomaly equation.
\end{abstract}
\maketitle
\tableofcontents
\allowdisplaybreaks

\section{Introduction}
Let $X$ be a smooth compact Calabi-Yau threefold, denote
\[N_{g,d}:=\int_{[\overline{M}_{g,n}(X,d)]^{\text{vir}}}1\]
to be the genus-$g$, degree $d\in H_{2}(X;\mathbb{Z})$ Gromov-Witten invariants, where $[\overline{M}_{g,n}(X,d)]^{\text{vir}}$ is the virtual fundamental class of the moduli space of stable map $\overline{M}_{g,n}(X,d)$ (c.f. \cite{li1998virtual} and \cite{ruan1997higher}).
Define \[F_{g}:=\sum_{d=0}^{\infty}N_{g,d}q^d\]
 to be the genus-$g$ generating funtions. Then there are two predictions from mirror symmetry:
\begin{itemize}
\item{There is a finitely generated subring of ``quasi-modular objects"
\[\mathcal{R}\in \mathbb{Q}[[q]],\] such that for all $g$
\[F_g\in \mathcal{R}.\]
}
\item{$F_g$ satisfies a recursion formula, which is called holomorphic anomaly equation. i.e.
    \[\frac{\partial}{\partial \bar{q}}F_g(q,\bar{q})=\text{exact formula of}F_{h<g}.\]
    where $F_g(q,\bar{q})$ is the modular completion of $F_g(q)$.}
\end{itemize}

In  general, it is usually very difficult to compute the Gromov-Witten invariants for compact Calabi-Yau threefold. Recently, it is proved that these two properties are satisfied for Quintic 3-fold (Ref. \cite{Guo2019}).  Instead, the twisted Gromov-Witten theory is much easier and also expected to satisfy the two properties in the above predictions. Many examples such as local $\mathbb{P}^2$ has been studied (Ref. \cite{lho2018stable}, \cite{coates2018gromov} and \cite{fang2018open}).
In this paper, inspired by the calculations in \cite{Guo2019}, we discuss one simple example $\mathcal{O}(3)$ over $\mathbb{P}^2$  and prove the quasi-modularity property and holomorphic anomaly equation.

The reason why we are interested in this twisted theory $\mathcal{O}(3)$ over $\mathbb{P}^2$ is: it has close relation with geometry of cubic elliptic curve, since the zeros of the generic section defines a cubic elliptic curve.

Our main theorem is
\begin{theorem}\label{theoremelliptic}
The formal twisted  theory $\mathcal{O}(3)$ over $\mathbb{P}^2$ satisfies
\begin{itemize}
\item{Quasi-modularity  property: the complex degree $g-1+n$ part  $[{\Omega^{\tau(q)}}_{g,n}(H,...,H)]_{g-1+n}$ is a quasi modular form of modular  group $\Gamma_0(3)$ with weight $2g-2+2n$ with valued in $A^{g-1+n}(\overline{\mathcal{M}}_{g,n};\mathbb{Q})$.
}
\item{The holomorphic anomaly equation:
\begin{align*}
&\frac{\partial}{\partial E_{2}}{\Omega^{\tau(q)}}_{g,n}(H,...,H)
\\=&\frac{1}{12}\sum_{j=1}^{n}\psi_{j}
{\Omega^{\tau(q)}}_{g,n}(H,...,H,1,H,...,
H)
\\&-\frac{1}{36}
i_{*}\left({\Omega^{\tau(q)}}_{g-1,n+2}(H,...,H,
1,1)
\right)
\\&-\frac{1}{36}
j_{*}\sum_{\substack{g_1+g_2=g\\S_1\sqcup S_2=\{1,...,n\}}}{\Omega^{\tau(q)}}_{g_1,|S_1|+1}
(H^{S_1},1)\otimes
{\Omega^{\tau(q)}}_{g_2,|S_2|+1}(H^{S_2},1)
\end{align*}
}
\end{itemize}
where $\tau(q)$ is the mirror map (see section~\ref{sec:O(3)}). $E_2$ is the second Eisenstein series. $i$ and $j$ are the gluing maps between moduli space of curves.
\[i: \overline{\mathcal{M}}_{g-1,n+2}\rightarrow \overline{\mathcal{M}}_{g,n}, \quad j: \overline{\mathcal{M}}_{g_1,|S_1|+1}\times \overline{\mathcal{M}}_{g_2,|S_2|+1}\rightarrow \overline{\mathcal{M}}_{g,n}.\]
\end{theorem}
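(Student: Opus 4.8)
The plan is to reconstruct the twisted CohFT $\Omega^{\tau(q)}_{g,n}$ from genus-zero data via the Givental--Teleman classification and then to read off both properties from the explicit $\Gamma_0(3)$-structure of the building blocks. Since the Euler-twisted theory of $\mathcal{O}(3)$ over $\mathbb{P}^2$ is generically semisimple, Teleman's theorem expresses $\Omega^{\tau(q)}_{g,n}$ as a finite sum over stable graphs $\Gamma\in G_{g,n}$,
\[
\Omega^{\tau(q)}_{g,n}=\sum_{\Gamma\in G_{g,n}}\frac{1}{|\mathrm{Aut}(\Gamma)|}\,(\xi_\Gamma)_*\,\mathrm{Cont}_\Gamma,
\]
where each graph contribution $\mathrm{Cont}_\Gamma$ is assembled from vertex factors (the topological part with $\psi$-insertions), edge factors built from the $R$-matrix $R(z)=1+R_1 z+R_2 z^2+\cdots$ of the quantum differential equation, and leg factors given by the $R$-action and translation applied to the input $H$. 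First I would set up this graph sum in the normalized canonical frame and isolate the scalar building blocks that enter it: the normalized canonical coordinates, the entries $R_k$, and the genus-zero two-point ``propagator.''

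The technical heart is to show that, after substituting the mirror map $\tau=\tau(q)$ of section~\ref{sec:O(3)}, each of these building blocks becomes a quasi-modular form for $\Gamma_0(3)$ of the expected weight. I would do this by solving the Picard--Fuchs equation of the mirror cubic, which computes periods of the universal family of cubic elliptic curves and is therefore governed by $\Gamma_0(3)$; the hypergeometric solution, composed with the mirror map, lands in the ring of quasi-modular forms $\widetilde{M}_*(\Gamma_0(3))=\mathbb{C}[E_2,\ldots]$. One then checks closure of this ring under the derivation $q\,d/dq$ and assigns a weight to every edge, vertex, and leg so that the weights are additive along $\Gamma$; a bookkeeping of Euler characteristics shows the total weight on each graph equals $2g-2+2n$. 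A parallel dimension count, using that the twisted virtual dimension vanishes, pins the relevant cohomological piece to complex degree $g-1+n$, giving the first bullet.

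For the holomorphic anomaly equation I would track precisely where the non-modular generator $E_2$ enters the graph sum. The key observation, as in the quintic computation of \cite{Guo2019}, is that the $E_2$-dependence is concentrated in a single generator---the propagator---so that $\partial/\partial E_2$ acts on $\Omega^{\tau(q)}_{g,n}$ by differentiating propagators only. Applying the Ramanujan-type relation for $q\,dE_2/dq$ and the Leibniz rule over the stable graphs, each differentiated propagator is interpreted combinatorially as the insertion of one new edge or the action on one leg. The three resulting operations---differentiating a leg (producing the $\psi_j$ term), adding a non-separating edge which lowers the genus and produces $\overline{\mathcal{M}}_{g-1,n+2}$ via $i_*$, and adding a separating edge which splits the graph and produces the convolution over $\overline{\mathcal{M}}_{g_1,|S_1|+1}\times\overline{\mathcal{M}}_{g_2,|S_2|+1}$ via $j_*$---reproduce the three terms of the equation, while the normalization of the propagator together with the $1/12$ in the Ramanujan identity fixes the coefficients $\tfrac{1}{12}$, $-\tfrac{1}{36}$, $-\tfrac{1}{36}$.

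The main obstacle I expect is the second step: establishing that the $R$-matrix entries, and not merely the genus-zero potential, are quasi-modular for $\Gamma_0(3)$ after the mirror map. This demands an explicit analysis of the quantum differential equation and an identification of its solution ring with $\widetilde{M}_*(\Gamma_0(3))$ together with the correct derivation, and it is here that all of the $\mathcal{O}(3)/\mathbb{P}^2$-specific modular input is used; once this is in place, the graph-sum bookkeeping in the remaining steps is structurally parallel to the known cases and largely combinatorial.
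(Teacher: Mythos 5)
Your plan follows the same architecture as the paper: the Givental--Teleman graph sum for the semisimple twisted theory, quasi-modularity of the scalar building blocks after the mirror map, weight and degree bookkeeping along stable graphs, and a Leibniz-rule argument for $\partial/\partial E_2$. The genus-zero and modular identifications you sketch are exactly what the paper carries out in sections 3--5, and your three combinatorial operations (leg, non-separating edge, separating edge) match the mechanism of Proposition~\ref{derRV}.

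However, the step you yourself flag as ``the main obstacle'' is left genuinely unresolved, and it is not routine. The Picard--Fuchs/quantum differential equation determines the $R$-matrix only up to the ambiguity $\exp\bigl(\sum_{k\geq 0} a_k z^{2k+1}\bigr)$ in each canonical direction, and, order by order in $z$, solving the recursion for the entries $r_k$ leaves an undetermined constant of integration at every step. The polynomiality of $r_k$ in $L$ with the specific degree bound of Proposition~\ref{Rstar1} --- which is what feeds the finite-generation statement (Theorem~\ref{finitgen}), hence the quasi-modularity and the concentration of all $E_2$-dependence in the single generator $X_1$ --- cannot be extracted from the differential equation alone. The paper fixes the constants by two extra inputs: the quantum Riemann--Roch initial condition at $q=0$, and the asymptotic expansion of the oscillatory integral for the Landau--Ginzburg mirror (Proposition~\ref{pro:osciasym} and Lemma~\ref{lemma:osciinitial}), which shows $\lim_{L\to 0} r_k = 0$ unless $k\equiv 0 \pmod 3$. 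Your proposal contains no mechanism playing this role, so the ``identification of the solution ring with quasi-modular forms'' cannot be completed as stated. A second, smaller point: the coefficients $\tfrac{1}{12}$ and $-\tfrac{1}{36}$ do not come from the Ramanujan identity for $q\,dE_2/dq$. In the paper $\partial/\partial E_2$ is the formal partial derivative in the ring $\mathbb{Q}[a^2,E_2,E_4,E_6]$, and the coefficients come from the identity $\frac{I_{00}^2}{L^2}X_1=\frac{1}{12}E_2-\frac{1}{12}a(Q)^2$ (Lemma~\ref{lem:mod-gene}) together with the fact that $X_1$ enters the $R$-matrix recursion \eqref{Rrecursion} linearly and only in the $H$-column; invoking $q\,dE_2/dq$ conflates the $q$-derivative (under which $E_2$ is not independent) with the formal derivative actually being taken.
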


%\begin{theorem}\label{theoremlocalP2}
%The formal twisted  theory $\mathcal{O}(-3)$ over $\mathbb{P}^2$ satisfies
%\begin{itemize}
%\item{Quasi-modularity  property: $[{\Omega^{\tau(q)}}_{g,n}(H,...,H)]_{3g-3+3n}$ is a meromorphic quasi modular form of modular  group $\Gamma_1(3)$ with weight $-n$ with valued in $H^{3g-3+3n}(\overline{\mathcal{M}}_{g,n})$. Moreover,
%    $[{\Omega^{\tau(q)}}_{g,0}]_{3g-3}$ is a meromorphic quasi modular form of modular  group $\Gamma_0(3)$ with weight $0$ with valued in $H^{3g-3}(\overline{\mathcal{M}}_{g})$.
%}
%\item{The holomorphic anomaly equation: for genus $g\geq2$,
%\begin{align*}
%&-24\frac{\partial}{\partial E_2}C_{g}(H,...,H)
%\\=&i_{*}C_{g-1}(H,...,H,1,1)
%+\sum_{\substack{g_1+g_2=g\\S_1\sqcup S_2=\{1,...,n\}}}j_{*}C_{g_1}(H,..,H,1)\otimes C_{g_2}(H,...,H,1)
%\\&-6\sum_{i=1}^{n}\psi_i C_{g}(H,...,H,1,H,...,H)
%\end{align*}
%}
%\end{itemize}
%\end{theorem}
\begin{remark}
Our theorem is somehow parallel to the quasi modularity and holomorphic anomaly equation of elliptic curve proved in \cite{oberdieck2018holomorphic}.
The main difference of the quasi modularity of twisted theory $\mathcal{O}(3)$ over $\mathbb{P}^2$ and elliptic curve is about the modular ring: one is $\Gamma_0(3)$, the other is $SL(2;\mathbb{Z})$.  The associated holomorphic anomaly equation is the same shape.
\end{remark}
This paper is organised as follows: In section~\ref{sec:pre}, we review some basic knowledge in twisted Gromov-Witten theory. In section~\ref{sec:O(3)}, we focus on the genus-0 twisted Gromov-Witten theory of the example $\mathcal{O}(3)$ over $\mathbb{P}^2$ and the computation of $R$ matrix. In section~\ref{sec:fg} and section~\ref{sec:modular}, we prove the finite generation  and quasi modularity property of twisted Gromov-Witten theory of the example $\mathcal{O}(3)$ over $\mathbb{P}^2$. In section~\ref{sec:HAE}, we prove the associated holomorphic anomaly equation.
\\{\bf  Acknowledgements.}
The authors would like to special thank Shuai Guo and Felix Janda for discussing Givental theory and Calabi-Yau geometry. The results are obtained during the visit of the author in University of Michigan. The author would like thank professor Melissa Liu and Yongbin Ruan for their help during the visit in Columbia University and University of Michigan.
The author is partially supported by  NSFC grant 11601279.

\section{Preliminary on twisted Gromov-Witten theory}\label{sec:pre}
Let $X$ be a smooth projective variety, $E$ is a holomorphic vector bundle over $X$. $\overline{M}_{g,n}(X,\beta)$ is the moduli space of stable maps to $X$ of genus-$g$, with $n$ markings, degree $\beta$. We consider the universal stable maps
over $\overline{M}_{g,n}(X,\beta)$
\[\xymatrix{
  \overline{M}_{g+1,n}(X,\beta) \ar[d]_{\pi} \ar[r]^{f} & X       \\
  \overline{M}_{g,n}(X,\beta)                     }\]
Set the K-theoretical push forward
\[E_{g,n,\beta}=R^0\pi_*f^*E-R^1\pi_*f^*E \in K^0(\overline{M}_{g,n}(X,\beta))\]
Let $\mathbf{c}$ be an invertible multiplicative character class.
For any $v_1,v_2\in H^*(X;\mathbb{Q})$, the twisted paring is defined by
\[(v_1,v_2)^{tw}:=\int_{X}v_1v_2\mathbf{c}(E)\]
and the correlator is defined by
\[\Omega_{g,n}(v_1,...,v_n):=\sum_{\beta=0}^{\infty}q^{\beta}p_{*}\left([\overline{M}_{g,n}(X,\beta)]^{vir}
ev_1^*v_1...ev_n^*v_n\mathbf{c}(E_{g,n,\beta})\right)\in H^*(\overline{M}_{g,n})[[q]]\]
Then the Cohomological field theory $(H^*(X,\mathbb{Q}),(,)^{tw},\Omega,1)$ with unit $1\in H^*(X,\mathbb{Q})$ is our twisted theory.
Similarly, we can define the (decedant) twisted genus-$g$ GW invariants by
\[\langle {\psi_1}^{k_1}v_1,...,{\psi_n}^{k_n}v_n\rangle_{g,n,\beta}
=\int_{\overline{M}_{g,n}}{\psi_1}^{k_1}ev_1^*v_1...{\psi_n}^{k_n}ev_n^*v_n
\mathbf{c}(E_{g,n,\beta}).\]
In general, there are many different type of twisted theory associated to a holomorphic vector bundle $E$ over a smooth projective variety $X$.
In this paper,  we study a certain type of  twisted theory, called formal theory (c.f. \cite{lho2018stable}). As an example, we give the formal theory associated to  $\mathcal{O}(3)$ over $\mathbb{P}^2$.
\begin{example}
Consider the $(\mathbb{C}^*)^3$  action on the base $\mathbb{P}^2$, with action
\[(t_0,t_1,t_2)\cdot[x_0,x_1,x_2]=[{t_0}^{-1}x_0,{t_1}^{-1}x_1,{t_2}^{-1}x_2]\]
then this torus action has a natural lift to the total space $\mathcal{O}_{\mathbb{P}^2}(3)$ by
\[(t_0,t_1,t_2)\cdot[x_0,x_1,x_2,v]=[{t_0}^{-1}x_0,{t_1}^{-1}x_1,{t_2}^{-1}x_2,v]\]
denote the associated equivariant parameter of the torus action $\lambda_0,\lambda_1,\lambda_2$, then taking the specialization $\lambda_i=\xi^{i}\lambda$, where $\xi=e^{\frac{2\pi i}{3}}$.
The invertible multiplier character class is the equivariant Euler class
$\mathbf{e}_{(\mathbb{C}^*)^3}(\mathcal{O}_{\mathbb{P}^2}(3))$ (taking specialization).  Then we obtain the associated formal twisted theory.
\end{example}
\subsection{Genus-0 twisted Gromov-Witten theory}
The genus-0 twisted GW invariants was introduced and studied in \cite{coates2007quantum} and \cite{coates2009computing}.  There are two important ingredients in genus-0 twisted theory: one is the twisted Lagrange cone, the other one is the quantum differential equation.
\subsubsection{Twisted Lagarian cone}
Assume $\{\phi_{\alpha}\}_{\alpha=1}^{N}$ is a linear basis of $H^*(X,\mathbb{Q})$.
A generic element in the Lagrangian cone $\mathcal{L}^{tw}$ has the form
\[J^{tw}(t(z),-z)=-1z+t(z)+\sum_{n,d}\frac{q^d}{n!}\langle t(\psi),...,t(\psi),\frac{\phi^\alpha}{-z-\phi_{n+1}}\rangle_{0,n+1,d}^{tw}\phi_{\alpha}\]
where $t(z)\in H^*(X,\mathbb{Q})[[z]]$.
By string equation, dilaton equation and genus-0 topological recursion relation, Givental proved the Lagrangian cone has very special geometry (c.f. \cite{givental2004symplectic}):
\begin{itemize}
\item{$\mathcal{L}^{tw}$ is a cone at the origin}
\item{Each tangent space $T$ is tangent to  the cone $\mathcal{L}^{tw}$ exactly along $zT$}
\end{itemize}
This implies that
\begin{itemize}
\item{The cone $\mathcal{L}^{tw}$ is determined by big $J$ function: $J(t,-z)$}
\item{Each tangent space $T$ is a $\Lambda[[z]]$ module with basis $S^{\text{tw}}(t,-z)^*\phi_{1},...,S^{\text{tw}}(t,-z)^*\phi_n$.}
\item{\[\mathcal{L}^{tw}=\left\{z\sum_{i=1}^{n}c_i(t,z)S^{\text{tw}}(t,-z)^*\phi_{i}\right\}\]}
\end{itemize}
where $S$ operator is defined by 2-pt function
\[{S^{\text{tw}}}(t,z)^*(v)=v+\sum_{n,d,\alpha}\frac{q^d}{n!}\langle v,t,...,t,\frac{\phi_\alpha}{z-\psi}\rangle_{0,n+2,d}^{\text{tw}}\phi^\alpha\]
\subsubsection{Quantum differential equation}
Quantum differential equation  at point $t\in H^*(X;\mathbb{Q})$ has the form
\begin{align}\label{QDE}
zdS(t,q)=dt*_{t}S(t,q).
\end{align}

By genus-0 topological recursion relation, the two point function $S(t,q)$ is always a fundamental solution of the QDE \eqref{QDE}.
Near the semisimple point $t=\tau$, there is a formal solution of the form $\Psi R e^{\frac{U}{z}}$, where $R$ is unique up to some constant matrix of power series of $z$: $e^{\sum_{k=0}^{\infty}z^{2k+1}diag(a^{(k)}_{1},...,a^{(k)}_{n})}$, where all $a_{j}^{(i)}\in \mathbb{C}$ (c.f. \cite{givental2001semisimple}).
\begin{remark}
In general, there is no relation between these two solutions. In some good cases, for example, the full equi-variant theory of a smooth toric variety. These 2 solution can be related by localization and quantum Riemann-Roch. The ambiguity of the $R$ matrix can be uniquely determined by quantum Riemann-Roch (c.f. \cite{givental2001semisimple}.).
\end{remark}
\subsection{Higher genus twisted Gromov-Witten invariants.}
Most examples of twisted theory we are interested are semi-simple. The semisimplicity of our examples can be seen from the proof.  So we can directly apply Givental-Teleman classification theorem (\cite{teleman2012structure}) to compute higher genus twisted GW invariants from genus-0 twisted GW invariants. Formally, the formula is
(for precise formula, see\cite{pandharipande2015relations})
\[\Omega=R_{\cdot}\omega\]
where $\omega_{g,n}=\Omega_{g,n}\cap H^0(\overline{M}_{g,n})$ is the topological part of $\Omega$.
It is a graph sum formula. All the contributions in the graph sum are expressed in terms of $R$ matrix.

So if we want to compute higher genus twisted GW invariants, the key point is to compute the $R$ matrix here.
Before compute it, we should first know where $R$ matrix comes from and its geometric meaning. The original proof in \cite{teleman2012structure} relies heavily on the topological structure of the moduli space of curves. For equivariant Gromov-Witten theory, there is a more geometric way to get the $R$ matrix. For example, for smooth  toric variety, first taking localization, then contract the unstable rational components to get stable graphs, in this process, we get a matrix $R_{loc}$, which comes from the contribution of all the unstable rational components. Then using quantum Riemann-Roch theorem, string and dilaton  equations, we get the $R$ marix.

\section{Twisted theory $\mathcal{O}(3)$ over $\mathbb{P}^2$}
\label{sec:O(3)}
In this section, we consider the formal twisted  theory associated to $\mathcal{O}(3)$ over $\mathbb{P}^{2}$.
\subsection{Classical $(\mathbb{C}^*)^3$ equivariant theory for $\mathbb{P}^{2}$}
In this subsection, we compute $(\mathbb{C}^*)^3$ equivariant theory for $\mathbb{P}^{2}$, where the action is given by
\[(t_0,t_1,t_2)\cdot[z_0,z_1,z_2]:=[{t_0}^{-1} z_0,{t_1}^{-1} z_1,{t_2}^{-1} z_2]\]
then the classical equivariant cohomology ring is
\[H^*_{(\mathbb{C}^*)^3}(\mathbb{P}^2)\cong \mathbb{C}[H]/\langle \prod_{i=0}^{2}(H-\lambda_i)=0\rangle\]
where $H=c_1^{\mathbb{C}^*}(\mathcal{O}(1))$ under the lift of the torus action to $\mathcal{O}(1)$, which satisfy $H|_{p_{\alpha}}=\lambda_\alpha$.
Choose a flat basis $\{1,H,H^2\}$ of $H^*_{(\mathbb{C}^*)^3}(\mathbb{P}^2)$.
The twisted paring by twisted bundle $\mathcal{O}(3)$ is: any $v_1,v_2$,
\[(v_1,v_2):=\int_{\mathbb{P}^2}v_1v_2e_{(\mathbb{C}^*)^3}(\mathcal{O}(3))
=\int_{\mathbb{P}^2}v_1v_2(3H)\]
From now on, for simplicity, we specialize the equivariant parameter by $\lambda_\alpha:=\xi^\alpha\lambda$.
Then the twisted paring matrix under basis $\{1,H,H^2\}$ is
\begin{align}\label{paring-matrix}
\begin{pmatrix}
0&3&0\\
3&0&0\\
0&0&3\lambda^3
\end{pmatrix}.
\end{align}
The algebra $H^*_{\mathbb{C}^*}(\mathbb{P}^2)$ is semisimple. It has a canonical basis
\[\mathfrak{e}_\alpha=\frac{\prod_{\beta\neq\alpha}(H-\xi^\beta\lambda)}{\prod_{\beta\neq\alpha}(\xi^\alpha\lambda-\xi^\beta\lambda)}
=\frac{\prod_{\beta\neq\alpha}(H-\xi^\beta\lambda)}{3\xi^{2\alpha}\lambda^2}\]
satisfying the properties of idempotent of the semisimple algebra:
\[\sum_{\alpha}\mathfrak{e}_\alpha=1,\quad \mathfrak{e}_\alpha\cdot\mathfrak{e}_\beta=\delta_{\alpha\beta}\mathfrak{e}_\alpha.\]
It is easy to compute \[H\cdot\mathfrak{e}_\alpha=(\xi^\alpha\lambda)\mathfrak{e}_\alpha=:\mathfrak{h}_{\alpha}\mathfrak{e}_\alpha\]
and its norm
\[\|\mathfrak{e}_\alpha\|=(1,\bar{\mathfrak{e}}_\alpha)^{\lambda}
=\sqrt{\frac{1}{\xi^{\alpha}\lambda}}.\]
\subsection{$I$ function and mirror theorem}
First, the correlator of the twisted  theory associated to $\mathcal{O}(3)$ over $\mathbb{P}^2$ now is defined to be
\[\Omega_{g,n}(v_1,...,v_n)=\sum_{d=0}^{\infty}q^dp_{*}\left(ev_1^*{v_1}...ev_n^*{v_n}
e_{(\mathbb{C}^*)^{3}}(R\pi_{*}f^*\mathcal{O}(3))\cap\left[\overline{M}_{g,n}(\mathbb{P}^2,d)\right]^{vir}\right)\]
In the following, we consider the associated shifted CohFT
\[(\Omega^{\tau(q)})_{g,n}(v_1,...,v_n)
=\sum_{k=0}^{\infty}\frac{1}{k!}(\pi_k)_*\left(\Omega_{g,n+k}(v_1,...,v_n,\tau(q)^{\otimes k})\right)
\]
with initial valuation
\begin{align*}
&(\Omega^{\tau(q)}|_{q=0})_{g,n}(v_1,...,v_n)
=(\Omega|_{q=0})_{g,n}(v_1,...,v_n)
\\=&p_{*}\left(ev_1^*{v_1}...ev_n^*{v_n}
e_{(\mathbb{C}^*)^{3}}(R\pi_{*}f^*\mathcal{O}(3))
\cap\left[\overline{M}_{g,n}(\mathbb{P}^2,0)\right]^{vir}\right)
\end{align*}
To compute $R$ matrix, we should use another family of elements $I(q,-z)$ in the twisted Lagrangian cone $\mathcal{L}^{\mathcal{O}(3)}_{\mathbb{P}^2}$, which is a hypergeometric series.
\begin{align*}
&I(q,z)=z\sum_{d=0}^{\infty}q^d\frac{\prod_{k=1}^{3d}(3H+kz)}{\prod_{k=1}^d\left((H+kz)^3-\lambda^3\right)}
=z\sum_{k\geq0}I_k(q)\left(\frac{H}{z}\right)^k
\end{align*}
with $I_{0}(q)=1+\sum_{d=1}^{\infty}q^d\frac{(3d)!}{(d!)^3}$,
$I_{1}(q)=\sum_{d=1}^{\infty}q^d\frac{(3d)!}{(d!)^3}\left(\sum_{m=d+1}^{3d}\frac{3}{m}\right)$.
A very important property of the $I$ function is: it is the solution of  Picard-Fuchs equation
\[\left(D_H^3-\lambda^3-q\prod_{k=1}^{3}(3D_{H}+kz)\right)I(q,z)=0\]
where $D_H=H+zq\frac{d}{dq}$. Take paring with normalized fixed point basis $\{\bar{\mathfrak{e}}_{\alpha}\}_{\alpha=1}^{3}$, we can replace $H$ with $\{\mathfrak{h}_\alpha\}$, then the Picard-Fuchs equation becomes
\[\left((zq\frac{d}{dq}+\mathfrak{h}_{\alpha})^3
-\lambda^3-q\prod_{k=1}^{3}(3zq\frac{d}{dq}+3\mathfrak{h}_{\alpha}+kz)\right)\left\langle I(q,z),\bar{\mathfrak{e}}_{\alpha}\right\rangle
=0.\]
\subsection{Computation of  $R(z)^*1$}
Via genus-0 mirror theorem (c.f. \cite{givental1996equivariant} and \cite{lian1997mirror}),
\[\frac{I(q,z)}{I_{0}(q)}=J(\tau(q),q,z)=zS(\tau(q),q,z)^*1\]
where  $\tau(q)=\frac{I_1(q)}{I_0(q)}$ is the mirror map.
By localization and quantum Riemann-Roch theorem (c.f. \cite{coates2007quantum}), we have the relation between $S$ operator and $R$ operator,
\[\langle S(\tau(q),q,z)^*1,\bar{\mathfrak{e}}_{\alpha}\rangle \Delta_{\alpha}^{\text{tw}}(z)^*
=\langle 1, R(\tau(q),q,z)\bar{e}_{\alpha}(\tau(q),q)\rangle e^{u^\alpha(\tau(q),q)/z}\]
where
\[\Delta_{\alpha}^{\text{tw}}(z)^*=e^{-\sum_{k>0}\frac{B_{2k}}{2k(2k-1)}z^{2k-1}
(\frac{1}{(-3\lambda_\alpha)^{2k-1}}+\sum_{\beta\neq\alpha}\frac{1}{(\lambda_{\alpha}-\lambda_{\beta})^{2k-1}})}\]
which comes from contribution of quantum Riemann-Roch theorem.
Thus $I_{0}R_{0\bar{\alpha}}(\tau(q),q,z)$ satisfies Picard-Fuchs equation
\begin{align}\label{PFforR}
\left((zq\frac{d}{dq}+L_{\alpha})^3
-\lambda^3-q\prod_{k=1}^{3}(3zq\frac{d}{dq}+3L_{\alpha}+kz)\right)
\left(I_{0}R_{0\bar{\alpha}}(\tau(q),q,z)\right)
=0
\end{align}
with initial condition
i.e.
\begin{align*}
I_{0}(q)\Psi_{0\bar{\alpha}}(\tau(q),q)|_{q=0}=\langle 1,\bar{\mathfrak{e}}_\alpha \rangle,
\quad {R_{0}}^{\alpha}(\tau(q),q,z)|_{q=0}
=\Delta_{\alpha}^{\text{tw}}(z)^*,
\end{align*}
where $L_{\alpha}(q)=\mathfrak{h}_{\alpha}+q\frac{d}{dq}u^{\alpha}(\tau(q),q)$.
By looking at coefficient of $z^0$ in the equation~\eqref{PFforR}.
\begin{align}\label{norm}
I_{0}(q)\Psi_{0\bar{\alpha}}(q)=\frac{L_\alpha}{\sqrt{\lambda^3}}
=(\xi^\alpha\lambda)^{-\frac{1}{2}}L
\end{align}
where $L:=(1-27q)^{-\frac{1}{3}}$.
Then by looking at the coefficients of $z^{k+1}, k\geq0$, we can get all the ${(R_k)_{0}}^{\alpha}$ recursively by solving the Picard-Fuchs equation and initial conditions. For example, here we list  ${(R_k)_{0}}^{\alpha}$ for $k=1,2,3$.
\begin{align*}
&{(R_1)_{0}}^{\alpha}=\frac{-L_\alpha^2}{18\lambda^3}=\frac{-\xi^{2\alpha}L^2}{18\lambda}
\\&{(R_2)_{0}}^{\alpha}=\frac{L_\alpha^4}{648\lambda^6}=\frac{\xi^\alpha L^4}{648\lambda^2}
\\&{(R_3)_{0}}^{\alpha}=\frac{2875L_\alpha^6-3600L_\alpha^3\lambda^3+702\lambda^6}{174960\lambda^9}
=\frac{2875L^6-3600L^3+702}{174960\lambda^3}
\end{align*}

In general, we have the following property of $R(z)^*1$
\begin{proposition}\label{Rstar1}
\begin{align*}
{(R_k)_{0}}^{\alpha}\in (\xi^\alpha\lambda)^{-k}\cdot \mathbb{Q}\left\{L^{2k-3j}:0\leq j\leq \lfloor\frac{2}{3}k\rfloor\right\}.
\end{align*}
\end{proposition}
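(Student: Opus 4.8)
The plan is to prove the statement by induction on $k$, extracting a recursion for ${(R_k)_{0}}^{\alpha}$ from the Picard--Fuchs equation \eqref{PFforR} and showing that each step preserves the asserted graded structure. First I would record the two structural facts that make the bookkeeping work. The same $z^0$--coefficient computation that yields \eqref{norm} forces $L_\alpha^3(1-27q)=\lambda^3$, i.e. $L_\alpha^3=\lambda^3L^3$ and hence $L_\alpha=\xi^\alpha\lambda L$, together with $\Xi_{\alpha,0}:=I_0\Psi_{0\bar\alpha}\propto L$. Second, since $L=(1-27q)^{-1/3}$ one has the differentiation rule $q\tfrac{d}{dq}L=\tfrac13(L^4-L)$, equivalently $DL_\alpha=\tfrac13(\lambda^{-3}L_\alpha^4-L_\alpha)$ with $D:=q\tfrac{d}{dq}$; thus $D$ sends a monomial $L_\alpha^a\lambda^b$ to a combination of $L_\alpha^{a+3}\lambda^{b-3}$ and $L_\alpha^a\lambda^b$, raising the top $L_\alpha$--power by exactly $3$ while preserving both the total weight (with $\deg L_\alpha=\deg\lambda=1$) and the residue of the $L_\alpha$--exponent modulo $3$. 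The claim is equivalent to showing ${(R_k)_{0}}^{\alpha}=\sum_j c_{k,j}\,L_\alpha^{2k-3j}\lambda^{-3(k-j)}$ with $c_{k,j}\in\mathbb{Q}$ and $0\le j\le\lfloor 2k/3\rfloor$, because substituting $L_\alpha=\xi^\alpha\lambda L$ and using $\xi^3=1$ turns each summand into $(\xi^\alpha\lambda)^{-k}L^{2k-3j}$.

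Next I would set up the recursion. Writing $\Xi_\alpha:=I_0R_{0\bar\alpha}=\Xi_{\alpha,0}\cdot\Theta_\alpha$ with $\Theta_\alpha=\sum_k {(R_k)_{0}}^{\alpha}z^k$, I would conjugate the Picard--Fuchs operator $P$ of \eqref{PFforR} to $\widehat P:=\Xi_{\alpha,0}^{-1}P\,\Xi_{\alpha,0}$, which then annihilates $\Theta_\alpha$. Because $\Xi_{\alpha,0}$ solves the leading equation, $\widehat P$ has no $z^0$--term; the crucial computation is that, after using $L_\alpha^3(1-27q)=\lambda^3$, all the multiplication terms in its $z^1$--part cancel and leave the clean first--order operator $\widehat P_1=3\lambda^3L_\alpha^{-1}D$. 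Collecting the coefficient of $z^{k+1}$ in $\widehat P\Theta_\alpha=0$ then yields
\[
3\lambda^3 L_\alpha^{-1}D\,{(R_k)_{0}}^{\alpha}\;=\;-\sum_{m=2}^{k+1}\widehat P_m\,{(R_{k+1-m})_{0}}^{\alpha},
\]
where $\widehat P_m$ is the $z^m$--coefficient of $\widehat P$, a homogeneous operator of weight $3-m$ built from $D$, $L_\alpha$ and $\lambda$ with rational coefficients.

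Finally I would run the induction. Assuming the asserted form for all indices $<k$, the right--hand side lies in the graded $\mathbb{Q}$--ring generated by $L_\alpha^3,\lambda^3$ and $D$, and three points must be checked: (i) the $(\xi^\alpha\lambda)$--homogeneity matches, i.e. $\widehat P_m$ shifts the $(\xi^\alpha\lambda)$--grading by $3-m$ so that every term carries the common factor $(\xi^\alpha\lambda)^{-k}$ (here $\xi^3=1$ reconciles the $\lambda$-- and $\xi$--powers); (ii) the residue of the $L_\alpha$--exponent modulo $3$ stays constant equal to $2k$; and (iii) a leading--symbol estimate showing each $\widehat P_m$ raises the top $L_\alpha$--degree by at most $2m$, which combined with $\widehat P_1=3\lambda^3L_\alpha^{-1}D$ forces $\deg_{L_\alpha}{(R_k)_{0}}^{\alpha}\le 2k$ and lowest exponent $\ge 2k \bmod 3$. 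One then recovers ${(R_k)_{0}}^{\alpha}$ by inverting $D$: since $DL^m=\tfrac m3(L^{m+3}-L^m)$ is invertible on each nonzero power, the antiderivative stays in $\mathbb{Q}[L]$, the integrability of the right--hand side being guaranteed by the existence of the genuine solution of \eqref{PFforR}; the single integration constant (present only when $k\equiv 0\bmod 3$) is pinned down by the initial value ${(R_k)_{0}}^{\alpha}|_{q=0}$, the $z^k$--coefficient of $\Delta_\alpha^{\mathrm{tw}}(z)^*$, which lies in $(\xi^\alpha\lambda)^{-k}\mathbb{Q}$. The main obstacle is step (iii): verifying the clean cancellation producing $\widehat P_1$ and, more substantially, the top--$L_\alpha$--degree bound on the higher $\widehat P_m$, since this is what pins the highest power to exactly $L^{2k}$ rather than something larger; the rest is homogeneity bookkeeping and the elementary inversion of $D$.
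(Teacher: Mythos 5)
Your overall strategy --- induction on $k$ via the recursion extracted from the Picard--Fuchs equation \eqref{PFforR}, with the homogeneity bookkeeping in $L_\alpha=\xi^\alpha\lambda L$ --- is the same as the paper's, and your steps (i)--(iii) are indeed verifiable by the explicit computation of the conjugated operators (the paper writes them out as $\mathcal{D}_1,\mathcal{D}_2,\mathcal{D}_3$ and obtains $D(r_k)\in(L^3-1)\cdot\mathbb{Q}\{L^{2k-3j}:0\leq j\leq\lfloor\frac{2k-1}{3}\rfloor\}$, which settles your ``main obstacle''). The genuine gap is at the very last step, the integration constant. First, a misstatement: the constant ambiguity from inverting $D$ is present for \emph{every} $k$, not only for $k\equiv 0\ (\mathrm{mod}\ 3)$; what is special about $k\equiv 0\ (\mathrm{mod}\ 3)$ is that only then does the target span $\mathbb{Q}\{L^{2k-3j}\}$ contain the constants. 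So for $k\not\equiv 0\ (\mathrm{mod}\ 3)$ you must \emph{prove} that the constant is zero, and your proposal offers no mechanism for this. Pinning the constant by the initial value at $q=0$ does not work as a structural argument: at $q=0$ one has $L=1$, where every monomial $L^{2k-3j}$ evaluates to $1$, so the value $r_k(q=0)$ (the $z^k$-coefficient of $\Delta_\alpha^{\mathrm{tw}}(z)^*$, a Bernoulli-number expression) mixes the constant term indistinguishably with all the other coefficients. Showing that this particular initial value forces the $L^0$-coefficient to vanish would require verifying a nontrivial identity between the Bernoulli data and the coefficients produced by the recursion --- essentially as hard as the proposition itself.

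This is precisely why the paper introduces the oscillatory integral section: the constant term is isolated not at $q=0$ but at the opposite boundary point $L=0$, where all positive powers $L^{2k-3j}$ die and $\lim_{L\to 0}r_k$ equals exactly the integration constant. Proposition~\ref{pro:osciasym} identifies $R(z)^*1$ with the asymptotic expansion of the Landau--Ginzburg oscillatory integral, and Lemma~\ref{lemma:osciinitial} computes that expansion at $L=0$ via Gamma-function asymptotics, finding $e^{\sum_{k\geq1}\frac{(-1)^{k+1}B_{3k+1}(1/3)}{k(3k+1)}(z/\lambda)^{3k}}$ --- only powers $z^{3k}$ appear, whence $\lim_{L\to 0}r_k=0$ for $k\not\equiv 0\ (\mathrm{mod}\ 3)$ and a (harmless, rational) constant otherwise. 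Without this second boundary condition, your induction only yields $r_k\in\mathbb{Q}\{L^{2k-3j}\}+\mathbb{Q}$, which is strictly weaker than the proposition. To close the gap you would need to supply this $L\to 0$ analysis (or an equivalent substitute), not just cite the $q=0$ initial condition.
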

\begin{proof}
For simplicity, we write
\[I_{0}(q)R_{0\bar{\alpha}}(q)=\frac{L_\alpha}{\sqrt{\lambda^3}}\cdot
(1+\frac{r_1(q)}{\xi^\alpha\lambda}z+\frac{r_2(q)}{(\xi^\alpha\lambda)^2}z^2+...)\]
then the Picard-Fuchs equation becomes
\begin{align}\label{PFr}\sum_{k=1}^{3}\frac{z^k}{(\xi^\alpha\lambda)^k L^k}\mathcal{D}_{k}(1+\frac{r_1(q)}{\xi^\alpha\lambda}z+\frac{r_2(q)}{(\xi^\alpha\lambda)^2}z^2+...)
=0,
\end{align}
with initial condition
\[(1+\frac{r_1(q)}{\xi^\alpha\lambda}z+\frac{r_2(q)}{(\xi^\alpha\lambda)^2}z^2+...)|_{q=0}
=e^{-\sum_{k>0}\frac{B_{2k}}{2k(2k-1)}z^{2k-1}
(\frac{1}{(-3\lambda_\alpha)^{2k-1}}+\sum_{\beta\neq\alpha}\frac{1}{(\lambda_{\alpha}-\lambda_{\beta})^{2k-1}})}\]
where
\begin{align*}
&\mathcal{D}_{1}=3D
\\&\mathcal{D}_{2}=3D^{(2)}-(L^3-1)D+\frac{1}{9}L^3(L^3-1)
\\&\mathcal{D}_{3}=D^{(3)}-(L^3-1)D^{(2)}-\frac{2}{9}(L^3-1)D
+\frac{1}{27}L^3(4L^3-1)(L^3-1).
\end{align*}
Notice \[D=q\frac{d}{dq}=\frac{1}{3}L(L^3-1)\frac{d}{dL}.\]
Observing that the coefficient of $z^{k+1}$ in equation~\eqref{PFr} gives us
\begin{align*}
&D(r_1)=-\frac{1}{27}L^2(L^3-1)\\
&D(r_2)=\frac{1}{81L}\left((27L^3-27)D(r_1)
-4L^8-3L^6r_1+5L^5+3L^3r_1-L^2-81D^{(2)}(r_1)\right)\\
&D(r_k)=\frac{1}{81L^2}\Big((27L^3-27)D^{(2)}(r_{k-2})
+(6L^3-6)D(r_{k-2})+(27L^4-27L)D(r_{k-1})-3L^4(L^3-1)r_{k-1}
\\&\hspace{70pt}-L^3(4L^3-1)(L^3-1)r_{k-2}
-81LD^{(2)}(r_{k-1})-27D^{(3)}(r_{k-2})\Big)
\end{align*}
for any $k\geq3$.

Then we solve the equation~\eqref{PFr} by induction, assume any $i<k$,
\[r_i\in\mathbb{Q}\left\{L^{2i-3j}:0\leq j\leq \lfloor\frac{2}{3}i\rfloor\right\}.\]
then
\[D(r_k)\in(L^3-1)\cdot\mathbb{Q}\left\{L^{2k-3j}:0\leq j\leq\lfloor\frac{2k-1}{3}\rfloor\right\}\]
After integration, we have
\[r_k\in\mathbb{Q}\left\{L^{2k-3j}:0\leq j\leq\lfloor\frac{2k-1}{3}\rfloor\right\}+\text{constant}\]
Then the proposition~\ref{Rstar1} follows from the following initial condition which will be proved by proposition~\ref{pro:osciasym} and lemma~\ref{lemma:osciinitial} via oscillatory integral and its asymptotic expansion.
\begin{align*}
&\lim_{L\rightarrow 0}r_k=0, \hspace{40pt} \text{if }\quad k\neq0(\text{mod}3)
\\&\lim_{L\rightarrow 0}r_k=\text{constant}, \quad \text{if }\quad k=0(\text{mod}3).
\end{align*}
\end{proof}
\subsection{Oscillatory integral and asymptotic expansion} \footnote{The argument in this section is similar to the Appendix A in \cite{lho2018crepant}.}
\label{asymptoic}
Now we consider the Landau-Ginzburg potential $W: (\mathbb{C}^*)^4\rightarrow\mathbb{C}$ for the $(\mathbb{C}^*)^3$ equivariant twisted theory $\mathcal{O}(3)$ over $\mathbb{P}^2$,
\[W(x_0,x_1,x_2)=\sum_{i=0}^{2}(x_i+\lambda_i\ln x_i)+x_3,\quad x_0x_1x_2 (-x_3)^{-3}=q.\]
%
%It is easy to compute that, for generic equivariant parameters $\lambda_{0},\lambda_{1},\lambda_{2}$, there are 3 critical points with coordinates
%\[x^{(\alpha)}_i=L_\alpha-\lambda_i, \quad(i=0,1,2); \quad y^{(\alpha)}=3L_\alpha.\]
%where $L_\alpha$ is the solution of polynomial equation $(L-\lambda_0)(L-\lambda_1)(L-\lambda_2)-27qL^3=0$.
%Now the critical values are
%\[\check{u}_\alpha=W(\{x^\alpha\})
%=\sum_{i=0}^{2}\lambda_i\left(\ln(L_\alpha-\lambda_i)-1\right)\]
%and if we choose $\lambda_i=\xi^i\lambda$, then
%\[q\frac{d}{dq}\check{u}_\alpha=L_\alpha=\xi^\alpha\lambda\cdot L.\]
%\begin{remark}The critical value $\check{u}_\alpha$ is different from the canonical coordinates $u_\alpha$. $L_\alpha$ is the same as before.
%\end{remark}
Now we consider the oscillatory integral
\begin{align*}
&\mathcal{I}(q,z,\lambda)=\int_{\gamma}e^{\frac{W}{z}}q\frac{dx_0dx_1dx_2dx_3}{x_0x_1x_2 x_3dq}
\end{align*}
where $\gamma$ is the Lefschetz thimble in a sub-torus in $(\mathbb{C}^*)^4$
\[\gamma\subset\{x_0x_1x_2(-x_3)^{-3}=q\}\subset(\mathbb{C}^*)^4\]
We consider the following 3 local charts of the sub-torus
\[U_0=\{(\frac{-q}{x_1x_2{x_3}^{-3}},x_1,x_2,x_3)\}\]
\[U_1=\{(x_0,\frac{-q}{x_0x_2{x_3}^{-3}},x_2,x_3)\}\]
\[U_2=\{(x_0,x_1,\frac{-q}{x_0x_1{x_3}^{-3}},x_3)\}\]
then we have the following proposition, which gives the relation between the $R(z)^*1$ and oscillatory integral.
\begin{proposition}\label{pro:osciasym}
For $\alpha=0,1,2$, the derivative of $\mathcal{I}(q,z,\lambda)|_{U_\alpha}$ has the asymptotic
  expansion
  \begin{equation*}
    z q\frac{d}{dq} \mathcal{I}(q,z,\lambda)|_{U_\alpha}\cdot1z
      \asymp^{z\rightarrow0}   -\frac{1}{3}(2\pi  z)^{\frac{3}{2}}  e^{\frac{\check{u}_\alpha}{z}}\cdot\frac{L_\alpha}{\sqrt{\lambda^3}}  \cdot \langle 1, R(z)(e^\alpha)\rangle,
  \end{equation*}
with
\[q\frac{d}{dq}\check{u}_\alpha=L_\alpha=\xi^\alpha\lambda\cdot L.\]
\end{proposition}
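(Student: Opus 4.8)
The plan is to evaluate the oscillatory integral by the method of stationary phase (steepest descent) as $z\to 0$ and then to match the resulting asymptotic series with the formal exponential solution of the Picard--Fuchs equation \eqref{PFforR} that defines the $R$-matrix. First I would locate the critical points of the phase $W$ on the constraint surface $\{x_0x_1x_2(-x_3)^{-3}=q\}$. Introducing a Lagrange multiplier $\mu$ for the constraint, the critical equations become $x_i+\lambda_i=\mu q$ for $i=0,1,2$ and $-x_3=3\mu q$; substituting into the constraint and using $\prod_{i}(y-\xi^i\lambda)=y^3-\lambda^3$ gives $(\mu q)^3(1-27q)=\lambda^3$, that is $\mu q=\xi^\alpha\lambda L=L_\alpha$ for $\alpha=0,1,2$, with $L=(1-27q)^{-1/3}$ as in \eqref{norm}. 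This exhibits exactly three non-degenerate critical points, one in each chart $U_\alpha$, matching the three idempotents $\mathfrak{e}_\alpha$. The critical value $\check{u}_\alpha:=W|_{\mathrm{crit}}$ then satisfies $q\frac{d}{dq}\check{u}_\alpha=\mu q=L_\alpha$ by the envelope theorem: differentiating the Lagrangian $W-\mu(x_0x_1x_2(-x_3)^{-3}-q)$ in $q$ at the critical point annihilates the variation in the $x_i$ and leaves $\mu$, which is exactly the stated identity.

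Next I would record the two facts that tie the integral to the $R$-matrix. On the one hand, integration by parts on the torus (the integral of an exact form $d(e^{W/z}\eta)$ vanishes) shows that $\mathcal{I}(q,z,\lambda)|_{U_\alpha}$ is annihilated by the Picard--Fuchs operator appearing in \eqref{PFforR}, the same operator satisfied by $I_0R_{0\bar\alpha}$. On the other hand, stationary phase along the Lefschetz thimble through $p_\alpha$ yields a genuine asymptotic expansion of the form $\mathcal{I}|_{U_\alpha}\asymp e^{\check{u}_\alpha/z}(2\pi z)^{3/2}(\det\mathrm{Hess}\,W)^{-1/2}\bigl(c_0+c_1z+c_2z^2+\cdots\bigr)$, where the power $3/2$ comes from the three-dimensional thimble.

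A direct computation of the Hessian of $W$ at $p_\alpha$, which is diagonalised by the torus coordinates and whose determinant is a monomial in $L_\alpha$ and $\lambda$, produces the leading normalisation $-\tfrac13\frac{L_\alpha}{\sqrt{\lambda^3}}$, that is precisely the factor $I_0\Psi_{0\bar\alpha}$ of \eqref{norm} together with the combinatorial constant $-1/3$ and the measure factor $(2\pi z)^{3/2}$. It then remains to identify the tail series with $\langle 1,R(z)e^\alpha\rangle$. Both $e^{-\check{u}_\alpha/z}\,zq\frac{d}{dq}\mathcal{I}|_{U_\alpha}$ and $I_0R_{0\bar\alpha}\,e^{-u^\alpha/z}$ are formal power series in $z$ solving the same Picard--Fuchs equation with the same leading exponential $e^{\check{u}_\alpha/z}$; since for a fixed critical value the space of such WKB (formal exponential) solutions is one-dimensional, the two series coincide up to an overall $z$-independent normalisation, and that normalisation is pinned down by comparing the $q=0$ (hence $L=1$) initial data against the quantum-Riemann--Roch factor $\Delta_\alpha^{\text{tw}}(z)^*$.

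The main obstacle is exactly this final matching: one must track the branches of the square roots, the constant $-1/3$, and the factor $(2\pi z)^{3/2}$ through the steepest-descent expansion and verify that the resulting normalisation reproduces $\Delta_\alpha^{\text{tw}}(z)^*$ at $q=0$, so that the asymptotic series is genuinely \emph{equal} to, not merely proportional to, the $R$-matrix entry. The underlying analytic input, namely the existence and uniqueness of the asymptotic expansion along the correct Lefschetz thimble, follows the template of Appendix A of \cite{lho2018crepant}.
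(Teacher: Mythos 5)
Your preparatory steps are sound, and two of them are nice complements to the paper: the Lagrange-multiplier computation correctly yields $\mu q=L_\alpha$ (hence three nondegenerate critical points, one per chart), the envelope theorem correctly gives $q\frac{d}{dq}\check{u}_\alpha=L_\alpha$, and the log-coordinate Hessian determinant at the critical point is indeed constant, equal to $-9\lambda^3$, which reproduces the prefactor $-\frac{1}{3}\frac{L_\alpha}{\sqrt{\lambda^3}}(2\pi z)^{\frac{3}{2}}$. Note, however, that the paper's own proof uses none of this: it expands $\mathcal{I}|_{U_\alpha}$ in powers of $q$, evaluates each coefficient exactly by repeated integration by parts, recognizing the result as $(-q)^{\lambda_\alpha/z}(3H)\,I(q,z)|_{p_\alpha}$ times three $q$-independent Gamma-type integrals, and then applies the Stirling expansion \eqref{eq:gamma-asym} to those integrals; the factor $\Delta_\alpha^{\text{tw}}(z)^*$ appears directly, and the mirror theorem together with the localization/QRR relation $\langle S^*1,\bar{\mathfrak{e}}_\alpha\rangle\Delta_\alpha^{\text{tw}}(z)^*=\langle 1,R\bar{e}_\alpha\rangle e^{u^\alpha/z}$ finishes the proof, with no stationary phase and no uniqueness argument.

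The gap in your route is concentrated exactly at the step you defer as ``the main obstacle,'' and it is not mere bookkeeping. First, your uniqueness claim is false as stated: for a fixed exponent, the formal exponential (WKB) solutions of the third-order equation \eqref{PFforR} form a free rank-one module over $\mathbb{C}[[z]]$, not a one-dimensional complex vector space, so your two series agree only up to an arbitrary $q$-independent power series $C(z)\in\mathbb{C}[[z]]$ --- precisely the $R$-matrix ambiguity $e^{\sum_{k}z^{2k+1}\mathrm{diag}(a^{(k)}_1,\dots)}$ recalled in Section 2 --- rather than up to a single ``$z$-independent'' constant. You must therefore match infinitely many initial data, i.e.\ a full power series in $z$. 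Second, the only point where $\langle 1,R(z)\bar{e}_\alpha\rangle$ is independently known is $q=0$, where by definition it equals $\Delta_\alpha^{\text{tw}}(z)^*$; but at $q=0$ your steepest-descent expansion is unavailable: the critical coordinate $x_\alpha=\xi^\alpha\lambda(L-1)$ collides with the toric boundary, the critical value $\check{u}_\alpha\sim\lambda_\alpha\ln q$ diverges, and the expansion is not uniform as $q\to 0$. So ``comparing the $q=0$ initial data'' cannot be done by tracking constants and branches through stationary phase; it requires evaluating the $q\to 0$ integral exactly --- a product of three Gamma functions, expanded via \eqref{eq:gamma-asym} --- which is precisely the computation that constitutes the paper's proof. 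Once that computation is done, it actually yields the statement for all $q$ at once (integration by parts makes the Gamma factors $q$-independent), so the Picard--Fuchs/WKB scaffolding you build becomes superfluous.
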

\begin{proof}
For simplicity, we only prove for $\alpha=0$. In the local chart $U_0$, the oscillatory
integral $\mathcal{I}(q,z)$ becomes:
\begin{align*}
&\mathcal{I}(q,z)|_{U_0}
=\int_{\gamma\cap U_0}e^{\frac{W}{z}}d\ln x_1d\ln x_2d\ln x_3
\\=&\int_{\gamma\cap U_0}(-q)^{\frac{\lambda_0}{z}}e^{\frac{1}{z}
\left(x_1+x_2+x_3-(\lambda_0-\lambda_1)\ln x_1
-(\lambda_0-\lambda_2)\ln x_2
-(-3\lambda_0)\ln x_3
\right)}
\sum_{d\geq0}\frac{(-q)^d}{d!z^d}{x_1}^{-d}{x_2}^{-d}{x_3}^{3d}
\frac{dx_1dx_2dx_3}{x_1x_2x_3}
\\=&(-q)^{\frac{\lambda_0}{z}}
\sum_{d\geq0}\frac{(-q)^d}{d!z^d}
\int_{0}^{\infty}e^{\frac{x_1}{z}}{x_1}^{\frac{-(\lambda_0-\lambda_1)}{z}-d}\frac{dx_1}{x_1}
\int_{0}^{\infty}e^{\frac{x_2}{z}}{x_2}^{\frac{-(\lambda_0-\lambda_2)}{z}-d}\frac{dx_2}{x_2}
\int_{0}^{\infty}e^{\frac{x_3}{z}}{x_3}^{\frac{-(-3\lambda_0)}{z}+3d}\frac{dx_3}{x_3}
\\=&(-q)^{\frac{\lambda_0}{z}}
\sum_{d=0}^{\infty}\frac{(-q)^d}{d!z^d}
\frac{\prod_{k=0}^{3d-1}(-3\lambda_0-kz)}{\prod_{k=1}^d
\left(
(kz+\lambda_0-\lambda_1)
(kz+\lambda_0-\lambda_2)\right)}
\\&\hspace{40pt}\cdot\int_{0}^{\infty}e^{\frac{x_1}{z}}{x_1}^{\frac{-(\lambda_0-\lambda_1)}{z}}\frac{dx_1}{x_1}
\int_{0}^{\infty}e^{\frac{x_2}{z}}{x_2}^{\frac{-(\lambda_0-\lambda_2)}{z}}\frac{dx_2}{x_2}
\int_{0}^{\infty}e^{\frac{x_3}{z}}{x_3}^{\frac{-(-3\lambda_0)}{z}}\frac{dx_3}{x_3}
\end{align*}
In the above, we use integration by parts repeatedly  and the vanishing of the integration equals to zero at the ends of the Lefchetz thimble.

Notice that
\begin{align*}
I(q,z)|_{p_0}=z\sum_{d=0}^{\infty}\frac{q^d}{d!z^d}
\frac{\prod_{k=1}^{3d}(3\lambda_0+kz)}{\prod_{k=1}^d
\left(
(kz+\lambda_0-\lambda_1)
(kz+\lambda_0-\lambda_2)\right)}
\end{align*}
Then we have
\begin{align*}
&3zq\frac{d}{dq} \mathcal{I}(q,z)|_{U_0}\cdot 1z
\\=&(-q)^{\frac{\lambda_0}{z}}(3H)
I^{\text{tw}}(q,z)|_{p_0}
\\&\int_{0}^{\infty}e^{\frac{x_1}{z}}{x_1}^{\frac{-(\lambda_0-\lambda_1)}{z}}\frac{dx_1}{x_1}
\int_{0}^{\infty}e^{\frac{x_2}{z}}{x_2}^{\frac{-(\lambda_0-\lambda_2)}{z}}\frac{dx_2}{x_2}
\int_{0}^{\infty}e^{\frac{x_3}{z}}{x_3}^{\frac{-(-3\lambda_0)}{z}}\frac{dx_3}{x_3}
\\ \asymp&^{z\rightarrow0_{-}}(-q)^{\frac{\lambda_0}{z}}(3\lambda_0)
I^{\text{tw}}(q,z)|_{p_0}
(-z)^{\frac{\lambda_0-\lambda_1}{-z}}\Gamma(\frac{\lambda_0-\lambda_1}{-z})
(-z)^{\frac{\lambda_0-\lambda_2}{-z}}\Gamma(\frac{\lambda_0-\lambda_2}{-z})
(-z)^{\frac{-3\lambda_0}{-z}}\Gamma(\frac{-3\lambda_0}{-z})
\\&(\lambda_0-\lambda_1)^{-\frac{1}{2}}
(\lambda_0-\lambda_2)^{-\frac{1}{2}}
(-3\lambda_0)^{-\frac{1}{2}}
e^{-\sum_{m\geq1}\frac{B_{2m}}{2m(2m-1)}
\left(\left(\frac{z}{\lambda_0-\lambda_1}\right)^{2m-1}
+\left(\frac{z}{\lambda_0-\lambda_2}\right)^{2m-1}
+\left(\frac{z}{-3\lambda_0}\right)^{2m-1}
\right)}
\\ \asymp&^{z\rightarrow0_{-}}
(-1)^{-\frac{1}{2}}(-q)^{\frac{\lambda_0}{z}}(-2\pi z)^{\frac{3}{2}}
e^{\frac{\lambda_0-\lambda_1}{-z}\left(1+\ln(\lambda_0-\lambda_1)\right)}
e^{\frac{\lambda_0-\lambda_2}{-z}\left(1+\ln(\lambda_0-\lambda_2)\right)}
e^{\frac{-3\lambda_0}{-z}\left(1+\ln(-3\lambda_0)\right)}
\Delta_{p_0}^{\text{tw}}(z)^*I(q,z)|_{p_0}
\\=&
-(-q)^{\frac{\lambda_0}{z}}(2\pi z)^{\frac{3}{2}}
e^{\frac{\lambda_0-\lambda_1}{-z}\left(1+\ln(\lambda_0-\lambda_1)\right)}
e^{\frac{\lambda_0-\lambda_2}{-z}\left(1+\ln(\lambda_0-\lambda_2)\right)}
e^{\frac{-3\lambda_0}{-z}\left(1+\ln(-3\lambda_0)\right)}
I_0(q)e^{\frac{u_\alpha}{z}}\langle 1,R(q,z)(\bar{e}_0)\rangle
\\=&
-(2\pi z)^{\frac{3}{2}}I_0(q)
e^{\frac{\check{u}_\alpha}{z}}\langle 1,R(q,z)(\bar{e}_0)\rangle
\end{align*}
Here we use  the standard asymptotic expansion of Gamma function:
\begin{align}\label{eq:gamma-asym}
\ln\Gamma(x+t) \asymp (x+t-\frac{1}{2})\ln x-x+\ln\sqrt{2\pi}
+\sum_{k=1}^{\infty}\frac{(-1)^{k+1}B_{k+1}(t)}{k(k+1)}x^{-k}
\end{align}
in complex domain as $x\rightarrow \infty$ uniformly on $|arg x|\leq\pi-\epsilon$, $\epsilon$ is given in advance.

\end{proof}
\begin{lemma}\label{lemma:osciinitial}
  \begin{align*}
  \left({L_\alpha}^{-1}\cdot z q \frac{d}{dq} \mathcal{I}_\alpha\right)|_{L=0}
 \asymp^{z\rightarrow0_{-}}
-\frac{1}{3}(2\pi z)^{\frac{3}{2}}\frac{1}{\sqrt{\lambda^3}}
e^{\sum_{i}\frac{\lambda_i}{z}\left(-1+\ln(-\lambda_i)\right)}e^{\sum_{k\geq1}
\frac{(-1)^{k+1}B_{3k+1}(\frac{1}{3})}{k(3k+1)}(\frac{z}{\lambda})^{3k}}
  \end{align*}
\end{lemma}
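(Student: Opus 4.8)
The plan is to evaluate the normalized oscillatory integral directly in the limit $L\to 0$, starting from the exact integral representation obtained in the proof of Proposition~\ref{pro:osciasym}, and to identify $L=0$ as the orbifold point of the mirror family, where the asymptotics are governed by a cubic (Airy-type) local model carrying a $\mathbb{Z}_3$-symmetry. The target formula is then just the $L=0$ specialization of Proposition~\ref{pro:osciasym}: dividing that statement by $L_\alpha$ gives $\bigl(L_\alpha^{-1}zq\frac{d}{dq}\mathcal{I}_\alpha\bigr)\asymp -\tfrac13(2\pi z)^{3/2}\lambda^{-3/2}e^{\check u_\alpha/z}\langle 1,R(z)e^\alpha\rangle$, so what must be computed at $L=0$ is the orbifold value of $\langle 1,R(z)e^\alpha\rangle$, which cannot yet be read off from $R$ and so must come from the integral itself.

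First I would pin down the critical geometry. Applying Lagrange multipliers to $W=\sum_{i=0}^{2}(x_i+\lambda_i\ln x_i)+x_3$ on the constraint $x_0x_1x_2(-x_3)^{-3}=q$ gives $x_i=\mu-\lambda_i$ for $i=0,1,2$ and $x_3=-3\mu$, and the constraint then forces $\mu^3(1-27q)=\lambda^3$, i.e. $\mu=\xi^\alpha\lambda L=L_\alpha$. Hence the critical value is $\check u_\alpha=\sum_i\lambda_i\ln(L_\alpha-\lambda_i)$, which since $\sum_i\lambda_i=0$ is the same as $\sum_i\lambda_i\bigl(-1+\ln(L_\alpha-\lambda_i)\bigr)$ and is consistent with $q\frac{d}{dq}\check u_\alpha=L_\alpha$; at $L=0$ it specializes to $\sum_i\lambda_i\bigl(-1+\ln(-\lambda_i)\bigr)$, which is exactly the exponential prefactor on the right-hand side. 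The structurally important point is that as $L\to 0$ the three critical points collide at $\mu=0$ while $x_3\to 0$ runs to the torus boundary, so $L=0$ is a confluent, orbifold limit at which naive stationary phase breaks down.

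Next I would take the $L\to 0$ limit of the exact product obtained in Proposition~\ref{pro:osciasym} for $3zq\frac{d}{dq}\mathcal{I}(q,z)|_{U_0}$, namely $(-q)^{\lambda_0/z}(3H)\,I(q,z)|_{p_0}$ times the three one–dimensional Gamma integrals $\int_0^\infty e^{x_i/z}x_i^{\,s_i}\frac{dx_i}{x_i}$. The key claim is that in the confluent limit the cyclic symmetry of the orbifold point $[\mathbb{C}^3/\mathbb{Z}_3]$ merges these factors into a single cubic integral of local type $W\sim w^3$ and forces the shift parameter in the Stirling expansion~\eqref{eq:gamma-asym} to take the uniform value $t=\tfrac13$. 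Feeding $t=\tfrac13$ into~\eqref{eq:gamma-asym}, the half-integer leading terms assemble into the universal prefactor $-\tfrac13(2\pi z)^{3/2}\lambda^{-3/2}$ together with $e^{\check u_\alpha/z}$, while the correction tail $\sum_{k\ge1}\frac{(-1)^{k+1}B_{k+1}(1/3)}{k(k+1)}x^{-k}$ collapses, after the cubic rescaling $x\propto\lambda/z$, to $\sum_{k\ge1}\frac{(-1)^{k+1}B_{3k+1}(1/3)}{k(3k+1)}(z/\lambda)^{3k}$, with every power of $z$ not divisible by $3$ cancelling by the $\mathbb{Z}_3$-symmetry. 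This cancellation is precisely the vanishing $\lim_{L\to 0}r_k=0$ for $k\not\equiv 0\pmod 3$ used in Proposition~\ref{Rstar1}.

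The main obstacle is making this confluent limit rigorous: since the three critical points collide and escape to the boundary $x_3=0$, one cannot apply stationary phase termwise but must resolve the degeneration, either by rescaling coordinates near the collision to an Airy-type normal form, or by analytically continuing the exact Gamma-function representation past its radius of convergence to the orbifold point and controlling the resulting divergent hypergeometric series by a Mellin--Barnes argument. Establishing both the emergence of the uniform shift $t=\tfrac13$ and the collapse of the asymptotic series onto the powers $z^{3k}$ — while correctly tracking the branch of $z^{3/2}$ and the orientation of the Lefschetz thimble as $z\to 0_{-}$ — is the crux, and is exactly where the analogy with Appendix~A of \cite{lho2018crepant} carries the argument.
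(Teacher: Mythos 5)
Your proposal correctly identifies the shape of the answer (Gamma factors with Stirling shift $t=\tfrac13$, and a $\mathbb{Z}_3$ root-of-unity cancellation leaving only the powers $z^{3k}$), and your critical-point computation of $\check u_\alpha$ is consistent with the paper. But there is a genuine gap exactly at the step you yourself flag as the crux. You start from the chart-$U_0$ product representation in the proof of Proposition~\ref{pro:osciasym}; that representation is unusable at $L=0$: there $q\to\infty$, so the series $I(q,z)|_{p_0}$ is far outside its domain of convergence, the prefactor $(-q)^{\lambda_0/z}$ is ill-defined, and the three critical points collide and escape to the boundary $x_3=0$. Your proposed remedy --- that ``the cyclic symmetry of the orbifold point merges these factors into a single cubic integral and forces the shift parameter $t=\tfrac13$'' --- is an assertion of the desired conclusion, not an argument; the actual work is deferred to an Airy-type normal form or a Mellin--Barnes continuation that is never carried out, and then to an analogy with \cite{lho2018crepant}. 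As written, the lemma is not proved.

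The paper's proof avoids the confluent analysis entirely by a different, elementary choice of coordinates on the constraint torus: eliminate $x_3$ rather than $x_0$. This gives the exact identity
\begin{align*}
L_\alpha^{-1}\, zq\frac{d}{dq}\mathcal{I}(q,z)
=\frac19\int_{\Gamma_\alpha\subset(\mathbb{C}^*)^3}
\left(\frac{x_0x_1x_2}{-q L_\alpha^3}\right)^{\frac13} e^{\frac{W}{z}}\,
\frac{dx_0\wedge dx_1\wedge dx_2}{x_0x_1x_2},
\end{align*}
where the exponent $\tfrac13$ arises because $zq\frac{d}{dq}$ acting on $e^{W/z}$ through the constraint brings down $q\partial_q W=-\tfrac13 x_3=\tfrac13(x_0x_1x_2/q)^{1/3}$. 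In these coordinates the limit $L\to0$ is manifestly regular: $-qL_\alpha^3\to\lambda^3/27$, and $x_3=-(x_0x_1x_2/q)^{1/3}\to0$ so the $x_3$-term drops out of $W$; the integral then factorizes exactly into $\prod_{i=0}^{2}(-z)^{\lambda_i/z+\frac13}\,\Gamma\!\left(\frac{\lambda_i}{z}+\frac13\right)$ after the rescaling $x_i/(-z)\to x_i$. Thus the shift $t=\tfrac13$ that you tried to extract from a degenerating stationary-phase picture is simply the exponent of the measure factor $(x_0x_1x_2)^{1/3}$ --- no resolution of the collision of critical points is needed. From that point on, your remaining steps (Stirling's expansion \eqref{eq:gamma-asym} with $t=\tfrac13$, and $\sum_{i=0}^{2}\xi^{-ik}=0$ unless $3\mid k$) are exactly what the paper does. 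The missing idea is therefore not heavy asymptotic machinery but this rewriting of the integral; without it your argument does not close.
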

\begin{proof}
Now we compute the oscillatory integral using coordinates $x_0,x_1,x_2$.
\begin{align*}
&{L_\alpha}^{-1}\cdot zq\frac{d}{dq}\mathcal{I}(q,z)
=\frac{1}{9}\int_{\Gamma_\alpha\subset(\mathbb{C}^*)^3}
\left(\frac{x_0x_1x_2}{-q {L_\alpha}^3}\right)^{\frac{1}{3}}e^{\frac{W}{z}}\frac{dx_0\wedge dx_1\wedge dx_2}{x_0x_1x_2}
\end{align*}
Notice that
$\left(-q{L_\alpha}^3\right)|_{L=0}=\frac{\lambda^3}{27},$
so at $L=0$, we have
  \begin{align*}
 & \left({L_\alpha}^{-1}\cdot z q \frac{d}{dq} \mathcal{I}\right)|_{L=0}
=\frac{1}{3}(\lambda^3)^{-\frac{1}{3}}
    \int_{\gamma_\alpha \subset (C^*)^3}   (x_0x_1x_2)^{\frac{1}{3}} e^{\sum_{i=0}^{2}  z^{-1} (x_i {+} \lambda_i \ln x_i)}  \frac{d x_0 d x_1 d x_{2}}{x_0 x_1  x_{2}}
\\ \asymp &^{z\rightarrow0_{-}}
\frac{1}{3}(\lambda^3)^{-\frac{1}{3}}\prod_{i=0}^{2}\left(
(-z)^{\frac{\lambda_i}{z}+\frac{1}{3}}\int_{\mathbb{R}_+} e^{-x_i+ \frac{\lambda_i}{z} \ln x_i}  \frac{d x_i}{{x_i}^{\frac{2}{3}}}
\right)
=\frac{1}{3}(\lambda^3)^{-\frac{1}{3}}\prod_{i=0}^{2}\left(
(-z)^{\frac{\lambda_i}{z}+\frac{1}{3}}\Gamma\left(\frac{\lambda_i}{z}+\frac{1}{3}\right)
\right)
\\ \asymp&^{z\rightarrow0_{-}}
-\frac{1}{3}(2\pi z)^{\frac{3}{2}}\frac{1}{\sqrt{\lambda^3}}
e^{\sum_{i}\frac{\lambda_i}{z}\left(-1+\ln(-\lambda_i)\right)}e^{\sum_{k\geq1}
\frac{(-1)^{k+1}B_{3k+1}(\frac{1}{3})}{k(3k+1)}(\frac{z}{\lambda})^{3k}}
\end{align*}
In the second step in the above, we use the replacement $\frac{x_i}{-z}\rightarrow x_i$. The last step follows from  the asymptotic expansion of Gamma function \eqref{eq:gamma-asym}.
\end{proof}
\subsection{Quantum differential equation}
The  quantum differential equation has the form $dS(t,q)=dt*_{t}S(t,q)$, where $d$ only action on $t$, not on $q$. For our use, we write it in terms of $S(t,z)^*$.
At point $\tau=\tau(q)$, the quantum differential equation becomes
\[D_{H}S(\tau(q),z)^*(1,H,H^2)=S(\tau(q),z)^*\left(\dot\tau*_{\tau(q)}(1,H,H^2)\right)\]
where $D_{H}=zq\frac{d}{dq}+H$ and $\dot\tau=H+q\frac{d\tau(q)}{dq}=I_{1,1}H$.
Assume
\[\dot\tau*_{\tau(q)}(1,H,H^2)
=(H+q\frac{d\tau(q)}{dq})*_{\tau(q)}(1,H,H^2)=(1,H,H^2)A.\]
Via Birkhoff factorization, we obtain
\begin{lemma} The quantum differential matrix
\begin{align*}
A=
\begin{pmatrix}
0&0&\lambda^3I_{3,3}\\
I_{1,1}&0&0\\
0&I_{2,2}&0\\
\end{pmatrix}
\end{align*}
where for any $n\geq m\geq1$
\[I_{m,n}:=\frac{I_{m-1,n-1}}{I_{m-1,m-1}}+q\frac{d}{dq}\frac{I_{m-1,n}}{I_{m-1,m-1}},\quad
I_{0,n}:=I_{n}.\]
\end{lemma}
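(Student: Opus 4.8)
The plan is to build the three columns of $A$ one at a time, using the mirror theorem to pin down the first column of the fundamental solution $S^{\text{tw}}$ and then propagating through the quantum differential equation; the successive normalizations that appear along the way are precisely what ``Birkhoff factorization'' means here. Writing $D:=q\frac{d}{dq}$, so that $D_{H}=zD+H$, the starting point is the mirror theorem, which already gives $zS(\tau(q),z)^{*}1=I(q,z)/I_{0}(q)=:P_{0}$. Since $H^{3}=\lambda^{3}$ in the specialized equivariant ring, we may expand $P_{0}=\sum_{n\ge 0}\frac{I_{0,n}}{I_{0,0}}H^{n}z^{1-n}$ with $I_{0,n}=I_{n}$. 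I then set $P_{m}:=\frac{1}{I_{m,m}}D_{H}P_{m-1}$ and aim to prove $P_{m}/z=S(\tau(q),z)^{*}(H^{m})$ for $m=0,1,2$. The division by $I_{m,m}$ is exactly the factor that strips off the positive power of $z$ from the leading term, i.e. the Birkhoff normalization that renders the result of the form $H^{m}+O(1/z)$.

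The main computation is to evaluate $D_{H}P_{m-1}$ and extract the recursion for $I_{m,n}$. Applying $D_{H}=zD+H$ to $P_{m-1}=\sum_{n}\frac{I_{m-1,n}}{I_{m-1,m-1}}H^{n}z^{m-n}$, the operator $zD$ raises each $z$-exponent by one while differentiating the coefficient, whereas cup product by $H$ sends $H^{n}\mapsto H^{n+1}$ and hence, after relabelling, also shifts the $z$-exponent. Because the resulting exponents $z^{m+1-n}$ are pairwise distinct, the two contributions never collide and collecting the coefficient of $z^{m+1-n}$ produces
\[
D_{H}P_{m-1}=\sum_{n}I_{m,n}\,H^{n}z^{m+1-n},\qquad I_{m,n}=\frac{I_{m-1,n-1}}{I_{m-1,m-1}}+D\frac{I_{m-1,n}}{I_{m-1,m-1}},
\]
so that $P_{m}=\sum_{n\ge m}\frac{I_{m,n}}{I_{m,m}}H^{n}z^{m+1-n}$. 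A short induction gives $I_{m,n}=0$ for $n<m$ (both summands vanish term by term, the borderline case $n=m-1$ giving $0+D(1)=0$), whence $P_{m}=H^{m}z+O(1)$ and $P_{m}/z=H^{m}+O(1/z)$ has exactly the leading term expected of $S^{*}(H^{m})$.

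I would then feed this into the quantum differential equation $D_{H}S^{*}(\phi)=S^{*}(\dot\tau*_{\tau(q)}\phi)$. The crucial observation is that $\dot\tau*_{\tau(q)}\phi$ is a $z$-\emph{independent} cohomology class, so matching only the $z^{0}$-coefficient on both sides determines an entire column of $A$. Starting from $S^{*}(1)=P_{0}/z$ and using $D_{H}S^{*}(1)=I_{1,1}(P_{1}/z)$ with leading term $I_{1,1}H$, we read off $\dot\tau*_{\tau(q)}1=I_{1,1}H$ and hence $S^{*}(H)=P_{1}/z$; repeating with $\phi=H$ gives leading term $I_{2,2}H^{2}$, so $\dot\tau*_{\tau(q)}H=I_{2,2}H^{2}$ and $S^{*}(H^{2})=P_{2}/z$; finally $\phi=H^{2}$ yields $D_{H}S^{*}(H^{2})=I_{3,3}(P_{3}/z)$, whose leading term is $I_{3,3}H^{3}=\lambda^{3}I_{3,3}\cdot 1$ because $H^{3}=\lambda^{3}$, so $\dot\tau*_{\tau(q)}H^{2}=\lambda^{3}I_{3,3}\,1$. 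Assembling the three columns reproduces the stated matrix, and the cyclic closure $1\to H\to H^{2}\to\lambda^{3}\,1$ comes for free from $H^{3}=\lambda^{3}$ rather than from a separate appeal to the Picard--Fuchs equation.

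The one point that needs care, and where I expect to spend the most effort, is the justification that $z$-independence of the quantum product allows a single leading coefficient to fix a whole column; this is exactly what keeps the bootstrap non-circular (we never presuppose $A$), and it is the real conceptual content of the Birkhoff factorization in this setting. A secondary technical check is that reducing $H^{n}$ via $H^{3}=\lambda^{3}$ is compatible with the formal reindexing in the recursion, which holds because distinct $z$-powers prevent any coefficients from merging after reduction.
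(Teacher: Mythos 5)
Your proposal is correct and follows essentially the same route as the paper: the mirror theorem pins down $S^{*}1$ as $I/(zI_{0})$, repeated application of $D_{H}$ with normalization by $I_{m,m}$ produces the expansions of $S^{*}H$ and $S^{*}H^{2}$, and the quantum differential equation identifies the leading ($z$-independent) terms as the columns of $A$, with $H^{3}=\lambda^{3}$ closing the cycle. The only difference is expository: you make explicit several points the paper leaves implicit (the recursion for $I_{m,n}$ from coefficient extraction, the vanishing $I_{m,n}=0$ for $n<m$, and the argument that $z$-independence of the quantum product lets a single leading coefficient determine each column), which is a welcome tightening rather than a new method.
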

\begin{proof}
Via Genus-0 mirror theorem (c.f.\cite{givental1996equivariant} and \cite{lian1997mirror}), we have
\begin{align*}
S(\tau(q),z)^*1=\frac{I(q)}{zI_{0}(q)}=1+\frac{I_1(q)}{I_0(q)}\frac{H}{z}+\frac{I_2(q)}{I_0(q)}\frac{H^2}{z^2}
+\frac{I_3(q)}{I_0(q)}\frac{H^3}{z^3}+O(z^{-4})
\end{align*}
Taking derivative along $D_H$
\begin{align*}
D_H(S(\tau(q),z)^*1)
=I_{1,1}H
+I_{1,2}z^{-1}H^2
+I_{1,3}z^{-2}H^3
+O(z^{-3})
=I_{1,1}(q)(S(\tau(q),z)^*H)
\end{align*}
So we get
\begin{align*}
S(\tau(q),z)^*H
=H+z^{-1}H^2\left(\frac{I_{1,2}}{I_{1,1}}\right)
+z^{-2}H^3\left(\frac{I_{1,3}}{I_{1,1}}\right)
+O(z^{-3})
\end{align*}
then taking derivatives along $D_H$ recursively, we obtain the
\[D_{H}\left(S(\tau(q),z)^*(1,H,H^2)\right)=S(\tau(q),z)^*(1,H,H^2)\begin{pmatrix}
0&0&\lambda^3I_{3,3}\\
I_{1,1}&0&0\\
0&I_{2,2}&0\\
\end{pmatrix}\]

\end{proof}
From symmetry of the quantum produc $\dot\tau*_{\tau}$ and character polynomial of matrix $A$, we  get following relations among the entries of matrix $A$.
\begin{lemma}\label{ZZYY}
Let $L(q):=(1-27q)^{-\frac{1}{3}}$, then
\begin{align}
&(i) \quad I_{1,1}I_{2,2}I_{3,3}=L^3, \quad I_{0,0}=I_{2,2}=I_{3,3}\nonumber
\\&(ii)\quad (1-27q)\left((q\frac{d}{dq})^2I_{0}\right)-27q\left(q\frac{d}{dq}I_{0}\right)-6qI_{0}=0
\nonumber\end{align}
\end{lemma}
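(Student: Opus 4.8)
The plan is to handle the two parts by different means: part (ii) comes directly from the hypergeometric coefficients of $I_0$, while part (i) comes from the algebra of the matrix $A$ together with the compatibility of the genus-$0$ data with the twisted pairing. For (ii) I would work entirely with $I_0(q)=\sum_d a_d q^d$, $a_d=(3d)!/(d!)^3$. Its coefficients satisfy $(d+1)^2 a_{d+1}=3(3d+1)(3d+2)a_d$, which, writing $\theta=q\frac{d}{dq}$, is equivalent to $\theta^2 I_0 = 3q(3\theta+1)(3\theta+2)I_0$. Expanding $(3\theta+1)(3\theta+2)=9\theta^2+9\theta+2$ and collecting terms yields exactly $(1-27q)\theta^2 I_0-27q\,\theta I_0-6q\,I_0=0$, so (ii) is essentially immediate.

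For part (i) I would use three inputs. First, self-adjointness of quantum multiplication for the twisted pairing $\eta$ (eq.~\eqref{paring-matrix}): the identity $(\dot\tau*v,w)^{tw}=(v,\dot\tau*w)^{tw}$ says precisely that $\eta A$ is symmetric. Its $(2,3)$ and $(3,2)$ entries are $3\lambda^3 I_{3,3}$ and $3\lambda^3 I_{2,2}$, so symmetry forces $I_{2,2}=I_{3,3}$. Second, I would compute the characteristic polynomial of $A$, which for its cyclic shape is $x^3-\lambda^3 I_{1,1}I_{2,2}I_{3,3}$, and compare it with $\prod_\alpha(x-L_\alpha)$, where the eigenvalues of $\dot\tau*$ are $L_\alpha=\xi^\alpha\lambda L$. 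Since $\sum_\alpha L_\alpha=\sum_{\alpha<\beta}L_\alpha L_\beta=0$ and $\prod_\alpha L_\alpha=\lambda^3 L^3$ (equivalently, $A^3=\lambda^3 L^3\cdot\mathrm{Id}$, because $L_\alpha^3=\lambda^3 L^3$ for every $\alpha$), Cayley--Hamilton gives $\lambda^3 I_{1,1}I_{2,2}I_{3,3}=\lambda^3 L^3$, i.e. $I_{1,1}I_{2,2}I_{3,3}=L^3$.

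The remaining equality $I_{0,0}=I_{2,2}$ is a self-duality (Poincar\'e-duality) statement that is not visible from the connection matrix $A$ alone, and I expect it to be the main obstacle. Here I would invoke the standard symplectic property of the fundamental solution: $S(\tau,-z)^*$ and $S(\tau,z)^*$ are adjoint for the twisted pairing, so in the basis $(1,H,H^2)$ the matrix $\Theta(z)$ of $S(\tau,z)^*$ satisfies $\Theta(-z)^{\mathrm{tr}}\eta\,\Theta(z)=\eta$. Using the mirror theorem $S^*1=I(q,z)/(zI_0)$ together with $H^3=\lambda^3$ to expand the first column of $\Theta$, namely the $(1,H,H^2)$-coordinates $(c_0,c_1,c_2)$ of $S^*1$, in terms of the $I_k/I_0$, the $(1,1)$-component of this relation reads $c_0(-z)c_1(z)+c_1(-z)c_0(z)+\lambda^3 c_2(-z)c_2(z)=0$. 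Translating its $z$-coefficients through the recursion defining the $I_{m,n}$ and using (ii) should produce $I_{0,0}=I_{2,2}$, after which $I_{2,2}=I_{3,3}$ closes the chain. The genuine work is in this last conversion — matching the $z$-expansion of $S^*$ against the Birkhoff-factorization data $I_{m,n}$ — whereas every other relation follows from short linear-algebra or generating-function manipulations.
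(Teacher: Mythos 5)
Your proof of part (ii) is correct and complete, and it is in fact more elementary than the paper's: the paper derives (ii) by showing that $\lambda^3\widetilde{I}(q,z)=S^*H^2$ lies in the tangent space of the Lagrangian cone and then comparing coefficients of $S^*1$, whereas your coefficient identity $(d+1)^2a_{d+1}=3(3d+1)(3d+2)a_d$ settles it in two lines. Your two structural arguments for part (i) are also sound and non-circular: the symmetry of $\eta A$ (with $\eta$ the pairing matrix \eqref{paring-matrix}) does give $I_{2,2}=I_{3,3}$, and comparing the characteristic polynomial $x^3-\lambda^3I_{1,1}I_{2,2}I_{3,3}$ with $\prod_\alpha(x-L_\alpha)$ does give the product relation, because the identification of the eigenvalues of $A$ with $L_\alpha=\xi^\alpha\lambda L$ follows from the $z^0$-part of \eqref{QDER} together with \eqref{norm}, neither of which uses Lemma~\ref{ZZYY}. (This is precisely the route the paper gestures at in the sentence preceding the lemma; its written proof of (i) is only a citation to Zagier--Zinger.)

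The genuine gap is the third relation $I_{0,0}=I_{2,2}$, which you correctly single out as the main obstacle and then do not prove. The $(1,1)$-entry of unitarity, $(S(z)^*1,S(-z)^*1)^{tw}=0$, only produces \emph{algebraic} quadratic relations among the $I_k$ --- for instance its $z^{-2}$-coefficient reads $I_2^2=2(I_1I_3-I_0I_4)$ --- while $I_{0,0}=I_{2,2}$ is a second-order differential-polynomial identity in $I_0,I_1,I_2$; to pass from one to the other you must eliminate $I_3,I_4,\dots$ via the Picard--Fuchs equation, and nothing in your sketch shows that this elimination lands on the desired identity. ``Should produce'' is exactly the statement that needs proof. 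A short way to close the gap inside your own framework: given the two relations you did establish, $I_{0,0}=I_{2,2}$ is equivalent to $I_0^2I_{1,1}=L^3$. Set $\theta=q\frac{d}{dq}$ and $J=I_0\ln q+I_1$; then $I_0^2I_{1,1}=I_0\,\theta J-J\,\theta I_0$, and a coefficient computation of exactly the same kind as your proof of (ii) shows that $J$ is also annihilated by $(1-27q)\theta^2-27q\theta-6q$ (concretely, one checks $\left((1-27q)\theta^2-27q\theta-6q\right)I_1=-2(1-27q)\theta I_0+27qI_0$). Abel's formula for this operator gives $\theta W=\frac{27q}{1-27q}W$ for the Wronskian $W=I_0\,\theta J-J\,\theta I_0$, hence $W=(1-27q)^{-1}=L^3$ since $W|_{q=0}=1$. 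Then $I_{2,2}^2=L^3/I_{1,1}=I_0^2$, and as both series have constant term $1$, we conclude $I_{2,2}=I_0=I_{0,0}$.
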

\begin{proof}
The equations $(i)$  are exactly the Zinger-Zagier relation (c.f. \cite{zagier2007some}). Now we give the proof of equation $(ii)$.
Let
\begin{align*}
&\widetilde{I}(q,z)=\sum_{d=0}^{\infty}\frac{q^d}{H+dz}\frac{\prod_{k=1}^{3d}(3H+kz)}{\prod_{k=1}^d\left((H+kz)^3-\lambda^3\right)}
=3\sum_{d=0}^{\infty}q^d\frac{\prod_{k=1}^{3d-1}(3H+kz)}{\prod_{k=1}^d\left((H+kz)^3-\lambda^3\right)}
\end{align*}
then
\[D_{H}\widetilde{I}(q,z)=\frac{I(q,z)}{z}\]
The $\widetilde{I}$ function satisfies PF equation
\[\left(D_H^3-\lambda^3-q\prod_{k=0}^{2}(3D_{H}+kz)\right)\widetilde{I}(q,z)=0\]
It is equivalent to
\begin{align}
\label{ItildePF}\left(D_H^2-3q\prod_{k=1}^{2}(3D_{H}+kz)\right)\frac{I(q,z)}{z}
=\lambda^3\widetilde{I}(q,z)
\end{align}
It is easy to compute the left hand side of the above equation is
\begin{align*}
&\left({D_H}^2-3q(3D_H+z)(3D_H+2z)\right)\frac{I(q,z)}{z}
\\=&(1-27q)\left(I_{0}I_{1,1}I_{2,2}S^*{H^2}+(zq\frac{d}{dq}(I_{0}I_{1,1}))S^*H
+(zq\frac{d}{dq}I_{0})I_{1,1}S^*H+((zq\frac{d}{dq})^2I_{0})S^*1\right)
\\&-27qz\left(I_{0}I_{1,1}S^*H+(zq\frac{d}{dq}I_{0})S^*1\right)
-6qz^2I_{0}S^*1
\end{align*}
Thus $\widetilde{I}(q,z)$ lies in the tangent space $T\mathcal{L}^{\text{tw}}$ of the twisted Lagrangian cone. By the expansion
\[\lambda^3\widetilde{I}(q,z)=\lambda^3H^{-1}+O(z^{-1})=H^{2}+O(z^{-1})\]
we have
\[\lambda^3\widetilde{I}(q,z)=S^*{H^2},\]
then comparing the coefficient of $S^*1$ in the equation \eqref{ItildePF}, we get equation $(ii)$.
\end{proof}

\section{Finite generation property}\label{sec:fg}
\subsection{Basic generators}\label{subsec:generator}
Define the following degree $k$ generator
\[X_k=\left(L^{-1}q\frac{d}{dq}\right)^k\ln\left(\frac{I_{00}}{L}\right),\quad
Y_{k}:=\left(L^{-1}q\frac{d}{dq}\right)^{k}\ln \left(q^{\frac{1}{3}}L\right).\]

\begin{lemma} For generators $\{X_k\}$ and $\{Y_k\}$, we have
\begin{align*}
&X_2=-{X_1}^2-\frac{1}{2}Y_2\\&
Y_{k}\in \mathbb{Q}[Y_{1},Y_{2},Y_{3}]_{deg=k}
\end{align*}
\end{lemma}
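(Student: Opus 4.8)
The plan is to reduce everything to the single first-order operator $\partial := L^{-1}q\frac{d}{dq}$, which by the displayed identity $q\frac{d}{dq}=\frac13 L(L^3-1)\frac{d}{dL}$ becomes the polynomial vector field $\partial=\frac13(L^3-1)\frac{d}{dL}$ on the $L$-line. By construction $Y_{k+1}=\partial Y_k$ and $X_{k+1}=\partial X_k$, and a direct computation gives the base value $Y_1=\partial\ln(q^{1/3}L)=\tfrac13 L^2$ (using $q^{1/3}L=\tfrac13(L^3-1)^{1/3}$). Iterating $\partial$ then produces $Y_2=\tfrac29(L^4-L)$ and $Y_3=\tfrac{2}{27}(L^3-1)(4L^3-1)$; and in general, since $\partial$ raises the top $L$-degree by $2$ and preserves the residue of every exponent modulo $3$, one sees inductively that $Y_k\in\mathbb{Q}\{L^{2k-3j}:0\le j\le\lfloor 2k/3\rfloor\}$, exactly the shape appearing in Proposition~\ref{Rstar1}.

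For the first relation I would introduce $g:=q\frac{d}{dq}\ln I_{00}=\frac{(q\,d/dq)I_{00}}{I_{00}}$, where $I_{00}=I_0$ is the series governed by Lemma~\ref{ZZYY}(ii), and convert that second-order Picard--Fuchs relation into a Riccati equation for $g$. Writing $1-27q=L^{-3}$ and $q=\frac{L^3-1}{27L^3}$ and substituting $(q\frac{d}{dq})^2 I_{00}=I_{00}\bigl(q\frac{d}{dq}\,g+g^2\bigr)$ into Lemma~\ref{ZZYY}(ii) yields, after dividing by $L^{-3}I_{00}$,
\[
q\tfrac{d}{dq}g=-g^2+(L^3-1)g+\tfrac{2}{9}(L^3-1).
\]
Since $X_1=\partial\ln(I_{00}/L)=L^{-1}\bigl(g-\tfrac13(L^3-1)\bigr)$, I would compute $X_2=\partial X_1$ directly, using this Riccati identity to eliminate the derivative of $g$. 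The $g^2$- and $g$-terms then cancel against those of $X_1^2$, and the remaining purely $L$-rational expression collapses to $X_2+X_1^2=-\tfrac19 L(L^3-1)=-\tfrac12 Y_2$, which is the claim.

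For the second relation, the decisive point is the base case $Y_4\in\mathbb{Q}[Y_1,Y_2,Y_3]_{\deg=4}$: because the three weighted-degree-$4$ monomials $Y_1^4,\,Y_1^2Y_2,\,Y_2^2$ already span the three-dimensional space $\mathbb{Q}\{L^8,L^5,L^2\}$ in which $Y_4=\tfrac{2}{27}(8L^8-13L^5+5L^2)$ lives, a linear-algebra computation gives $Y_4=9Y_1^2Y_2+\tfrac{15}{2}Y_2^2$. Granting this, I finish by induction on $k$ using the chain rule: if $Y_k=P(Y_1,Y_2,Y_3)$ is weighted-homogeneous of degree $k$ (assigning $Y_j$ weight $j$), then since $\partial Y_1=Y_2$, $\partial Y_2=Y_3$, $\partial Y_3=Y_4$,
\[
Y_{k+1}=\partial P=\frac{\partial P}{\partial Y_1}Y_2+\frac{\partial P}{\partial Y_2}Y_3+\frac{\partial P}{\partial Y_3}Y_4,
\]
and substituting the expression for $Y_4$ exhibits $Y_{k+1}$ as a weighted-degree-$(k+1)$ polynomial in $Y_1,Y_2,Y_3$. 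The main obstacle is precisely this closure step for $Y_4$: every higher $Y_k$ is then automatic, but the induction cannot start until one verifies that applying $\partial$ to the top generator $Y_3$ does not leave $\mathbb{Q}[Y_1,Y_2,Y_3]$, which is exactly what the spanning computation above guarantees.
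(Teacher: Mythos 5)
Your proof is correct, and while it rests on the same two foundational facts as the paper (Lemma~\ref{ZZYY}(ii) and the vector field $L^{-1}q\frac{d}{dq}=\frac13(L^3-1)\frac{d}{dL}$), the induction for the second identity runs along a genuinely different track. For $X_2=-X_1^2-\frac12 Y_2$, the paper simply asserts that it follows from Lemma~\ref{ZZYY}(ii); your Riccati reformulation $q\frac{d}{dq}g=-g^2+(L^3-1)g+\frac29(L^3-1)$ for $g=q\frac{d}{dq}\ln I_{00}$ is exactly the mechanism behind that assertion, and your cancellation closes correctly to $X_2+X_1^2=-\frac19L(L^3-1)=-\frac12Y_2$. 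For $Y_k\in\mathbb{Q}[Y_1,Y_2,Y_3]_{\deg=k}$, the paper's induction instead hinges on the \emph{inversion} identities $L^2=3Y_1$, $L=9Y_1^2-\frac92Y_2$, $1=27Y_1^3-\frac{135}{2}Y_1Y_2+\frac{27}{2}Y_3$: one shows each $Y_k$ is an $L$-polynomial supported on exponents $2k-3j$ and then rewrites each such monomial as a weighted-homogeneous degree-$k$ polynomial in $Y_1,Y_2,Y_3$. You prove instead that $\mathbb{Q}[Y_1,Y_2,Y_3]$ is \emph{closed under the derivation} $\partial$, which reduces the whole induction to the single identity $Y_4=9Y_1^2Y_2+\frac{15}{2}Y_2^2$ plus the chain rule; I verified this identity ($Y_4=\frac{2}{27}(8L^8-13L^5+5L^2)$, and $9Y_1^2Y_2+\frac{15}{2}Y_2^2$ gives the same), so the argument is complete. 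Your route is more economical and isolates the one nontrivial closure fact; the paper's route proves something slightly stronger — explicit rewriting rules for all admissible $L$-monomials — which are reused later when the generators are matched with quasi-modular forms (e.g.\ $\frac{I_{00}^2}{L^2}Y_1=\frac13 a(Q)^2$ uses $L^2=3Y_1$). One small inaccuracy in a side remark: $\partial$ does \emph{not} preserve exponent residues mod $3$; since $\partial L^n=\frac n3(L^{n+2}-L^{n-1})$, every residue is shifted by $2$, and the correct statement is that all exponents occurring in $Y_k$ are congruent to $2k$ modulo $3$, which is what matches Proposition~\ref{Rstar1}. This slip is immaterial, as your chain-rule induction never invokes that remark.
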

\begin{proof}The first equation just follows from $(ii)$ in Lemma\ref{ZZYY}. The second one follows easily by induction from the following identities
\begin{align*}
&L = 9{Y_1}^2-\frac{9}{2}Y_2\\&
L^2 = 3Y_1\\&
1 = 27{Y_1}^3-\frac{135}{2}Y_1 Y_2+\frac{27}{2}Y_3\\&
L^{-1}q\frac{d}{dq}L=\frac{1}{3}L^3-\frac{1}{3}.
\end{align*}

\end{proof}
\subsection{Total $R$ matrix}
Together the quantum differential equation \eqref{QDE} and the relation between $S$ operator and $R$ operator, we obtain the recursion relation of total $R$ matrix.
\begin{align}\label{QDER}\left(zq\frac{d}{dq}+L_\alpha(q)\right)
(R_{0\bar{\alpha}},R_{1\bar{\alpha}},R_{2\bar{\alpha}})
=(R_{0\bar{\alpha}},R_{1\bar{\alpha}},R_{2\bar{\alpha}})A
\end{align}
where $R_{i\bar{\alpha}}:=\langle H^i,R(\tau(q),z)(\bar{e}_\alpha(q))\rangle$.
From the recursion formula of $R$ matrix, we obtain the important property of $R$ matrix
\begin{proposition}For any $k\geq0$,
\begin{align*}
&{(R_k)_{0}}^{\alpha}\in (\xi^\alpha\lambda)^{-k}\cdot \mathbb{Q}[L]_{\text{deg}=k}
\\&{(R_k)_{1}}^{\alpha}\in (\xi^\alpha\lambda)^{-k+1}\frac{{I_{00}}^2}{L^2}\cdot \mathbb{Q}[X_1,L]_{\text{deg}=k}
\\&
{(R_k)_{2}}^{\alpha}\in (\xi^\alpha\lambda)^{-k+2}\frac{{I_{00}}}{L}\cdot \mathbb{Q}[L]_{\text{deg}=k}
\end{align*}

\end{proposition}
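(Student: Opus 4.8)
The plan is to propagate the first row, which is exactly the content of Proposition~\ref{Rstar1}, through the recursion \eqref{QDER} and read off the shape of the other two rows. Reading \eqref{QDER} against the explicit matrix $A$ gives the three scalar equations
\[
\left(zq\tfrac{d}{dq}+L_\alpha\right)R_{0\bar\alpha}=I_{1,1}R_{1\bar\alpha},\qquad
\left(zq\tfrac{d}{dq}+L_\alpha\right)R_{1\bar\alpha}=I_{2,2}R_{2\bar\alpha},\qquad
\left(zq\tfrac{d}{dq}+L_\alpha\right)R_{2\bar\alpha}=\lambda^3 I_{3,3}R_{0\bar\alpha},
\]
and extracting the coefficient of $z^k$ turns the first two into explicit formulas for ${(R_k)_1}^{\alpha}$ in terms of the first row and for ${(R_k)_2}^{\alpha}$ in terms of the second. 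I would work throughout in the normalization of Proposition~\ref{Rstar1}, i.e.\ after factoring out the common prefactor $\Psi_{0\bar\alpha}=L_\alpha/(I_0\sqrt{\lambda^3})$; by \eqref{norm} one has $q\frac{d}{dq}\ln\Psi_{0\bar\alpha}=-LX_1$, so in this normalization the differential operator in \eqref{QDER} effectively reads $zq\frac{d}{dq}+L_\alpha-zLX_1$, and this single extra term is the only place where $X_1$ can enter. The remaining inputs are $I_{00}=I_{2,2}=I_{3,3}$ and $I_{1,1}I_{2,2}I_{3,3}=L^3$ from Lemma~\ref{ZZYY} (so $1/I_{1,1}=I_{00}^2/L^3$ and $1/I_{2,2}=1/I_{00}$) together with $q\frac{d}{dq}L=\frac13(L^4-L)$.

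Substituting ${(R_k)_0}^{\alpha}=(\xi^\alpha\lambda)^{-k}r_k$ with $r_k\in\mathbb{Q}[L]$ into the first equation yields
\[
{(R_k)_1}^{\alpha}=(\xi^\alpha\lambda)^{-k+1}\frac{I_{00}^2}{L^2}\,W_k,\qquad
W_k=-X_1 r_{k-1}+\tfrac13(L^3-1)r_{k-1}'+r_k,
\]
so the factor $I_{00}^2/L^2$ is produced by $1/I_{1,1}$ while the single linear occurrence of $X_1$ comes precisely from the $-zLX_1$ term. Assigning $\deg L=\tfrac12$ and $\deg X_1=1$, a weight count shows $W_k\in\mathbb{Q}[X_1,L]_{\deg=k}$, which is the second assertion.

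The third row is the delicate step. Feeding ${(R_k)_1}^{\alpha}$ into the second equation forces $q\frac{d}{dq}$ onto the $X_1$ inside $W_{k-1}$, producing $q\frac{d}{dq}X_1=LX_2$; here I would invoke the finite-generation relation $X_2=-X_1^2-\tfrac12 Y_2$ (with $Y_2=\tfrac29(L^4-L)\in\mathbb{Q}[L]$). Collecting everything gives ${(R_k)_2}^{\alpha}=(\xi^\alpha\lambda)^{-k+2}\frac{I_{00}}{L}\,B_k$ with $B_k=X_1 W_{k-1}+\frac1L q\frac{d}{dq}W_{k-1}+W_k$, and the whole point is that $B_k$ is a \emph{pure} polynomial in $L$. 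Indeed, the $X_1^2$-terms cancel because the $-X_1^2 r_{k-2}$ from $X_1W_{k-1}$ is annihilated by the $+X_1^2 r_{k-2}$ arising from $-X_2 r_{k-2}$ via $X_2=-X_1^2-\tfrac12 Y_2$, and the $X_1$-terms cancel because the $X_1$-coefficient of $W_k$ is exactly $-r_{k-1}$, matching the term produced by $\frac1L q\frac{d}{dq}W_{k-1}$. What survives, $B_k=\tfrac12 Y_2 r_{k-2}+\tfrac13(L^3-1)P_{k-1}'+P_k$ with $P_j=\tfrac13(L^3-1)r_{j-1}'+r_j$, lies in $\mathbb{Q}[L]_{\deg=k}$, which is the third assertion.

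The induction on $k$ is then immediate, each step being one of the two substitutions above, with base cases supplied by the explicit ${(R_k)_0}^{\alpha}$ recorded before Proposition~\ref{Rstar1}. I expect the main obstacle to be exactly the total disappearance of $X_1$ from the third row: a priori, differentiating the $X_1$-linear second row and multiplying by $L_\alpha$ generates both $X_1$ and $X_1^2$, and only the quadratic identity $X_2=-X_1^2-\tfrac12 Y_2$ together with the precise form of the $X_1$-coefficient of $W_k$ forces them to cancel. As an independent check I would confront this with the third (cyclic) equation of \eqref{QDER}, which ties the third row back to the $X_1$-free first row and hence cannot tolerate any surviving $X_1$.
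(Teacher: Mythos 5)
Your proposal is correct and takes essentially the same route as the paper: propagate Proposition~\ref{Rstar1} through the first two columns of \eqref{QDER}, using the normalization \eqref{norm} (which produces the $-zLX_1$ term) and Lemma~\ref{ZZYY} to identify the prefactors $\frac{I_{00}^2}{L^2}$ and $\frac{I_{00}}{L}$. Your explicit bookkeeping of the $X_1$- and $X_1^2$-cancellations in the third row, via $q\frac{d}{dq}X_1=LX_2$ and $X_2=-X_1^2-\tfrac12 Y_2$, is precisely the content of the paper's displayed formula \eqref{Rrecursion}, whose second line is already $X_1$-free; the paper states that formula without detail (citing only part $(i)$ of Lemma~\ref{ZZYY}, although, as you correctly identify, part $(ii)$ — equivalently the relation $X_2=-X_1^2-\tfrac12 Y_2$ — is what forces the cancellation).
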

\begin{proof}
The first one is just a restatement of proposition~\ref{Rstar1}.
The first two columns of \eqref{QDER} give us \[R(z)_{1\bar{\alpha}}=\frac{1}{I_{1,1}}(zq\frac{d}{dq}+L_\alpha(q))R(z)_{0\bar{\alpha}}\]
\[R(z)_{2\bar{\alpha}}=\frac{1}{I_{2,2}}(zq\frac{d}{dq}+L_\alpha(q))R(z)_{1\bar{\alpha}}\]
Then the recursion relation for $\{{R(z)_{i}}^{\alpha}\}$ is
\[{R(z)_{1}}^{\alpha}=\frac{1}{I_{1,1}}\left(\|e_\alpha\|^{-1}zq\frac{d}{dq}
\left(\|e_\alpha\|{R(z)_{0}}^{\alpha}\right)
+L_\alpha(q){R(z)_{0}}^{\alpha}\right)\]
\[{R(z)_{2}}^{\alpha}=\frac{1}{I_{2,2}}\left(\|e_\alpha\|^{-1}zq\frac{d}{dq}
\left(\|e_\alpha\|{R(z)_{1}}^{\alpha}\right)
+L_\alpha(q){R(z)_{1}}^{\alpha}\right)\]
By using equation \eqref{norm} and  equations $(i)$ in lemma~\ref{ZZYY}, we get
\begin{align}\label{Rrecursion}
&{R(z)_{1}}^{\alpha}=\frac{{I_{00}}^2}{L^2}\xi^\alpha\lambda {R(z)_0}^\alpha+z\frac{{I_{00}}^2}{L^2}\left(-X_{1} {R(z)_0}^\alpha+L^{-1}q\frac{d}{dq}{R(z)_0}^\alpha\right)
\\&{R(z)_{2}}^{\alpha}=\frac{I_{00}}{L}\left((\xi^\alpha\lambda)^2{R(z)_0}^\alpha
+2z\xi^\alpha\lambda\cdot L^{-1}q\frac{d}{dq}{R(z)_0}^\alpha+\frac{1}{9}z^2
\left((L^4-L){R(z)_0}^\alpha+9\left(L^{-1}q\frac{d}{dq}\right)^{2}{R(z)_0}^\alpha\right) \right)
\nonumber\end{align}
Notice that
\[L^{-1}q\frac{d}{dq}L=\frac{1}{3}L^3-\frac{1}{3}.\]
Combining with proposition~\ref{Rstar1},
we prove the proposition.
\end{proof}
\subsection{Finite generation property}
\begin{theorem}\label{finitgen}
\[\left[\Omega_{g,n}^{\tau(q)}(H^{i_1},...,H^{i_n})\right]_{d} \in \left(\frac{I_{00}}{L}\right)^{2g-2+\sum_{j=1}^{n}(2\delta_{i_{j}}^{1}+\delta_{i_j}^{2})}\lambda^{g-1+\sum_{j=1}^{n}i_j-d} \mathbb{Q}[X_1,L]_{\deg=d}\otimes A^{d}(\overline{\mathcal{M}}_{g,n};\mathbb{Q})
\]
where $[\quad]_{d}$ means the complex cohomological  degree $d$ part.
\end{theorem}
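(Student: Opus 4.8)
The plan is to establish Theorem~\ref{finitgen} by applying the Givental-Teleman reconstruction $\Omega = R_{\cdot}\omega$ as a graph sum, and then to track the degrees in $L$, the powers of $\lambda$, and the factor $I_{00}/L$ through each building block of the graph sum. The essential inputs are already in place: the structure of $R(z)^*1$ from Proposition~\ref{Rstar1}, the structure of the full $R$-matrix from the previous proposition, the normalization $I_0\Psi_{0\bar\alpha} = (\xi^\alpha\lambda)^{-1/2}L$ from equation~\eqref{norm}, and the Zinger-Zagier relations of Lemma~\ref{ZZYY}. The guiding principle is that the graph-sum formula expresses $\Omega_{g,n}^{\tau(q)}$ in terms of vertex contributions (genus-$h$ topological correlators $\omega_h$ weighted by $\Psi$-matrices), edge contributions (the bilinear form built from $R$), leg contributions ($R$-matrix entries and $\psi$-insertions), all summed over stable graphs of genus $g$ with $n$ legs.

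First I would fix the bookkeeping for each type of factor. For a leg carrying $H^{i}$ I would use the previous proposition: ${(R_k)_{i}}^{\alpha}$ carries a power $(\xi^\alpha\lambda)^{-k+i}$, a factor $(I_{00}/L)^{2\delta_i^1 + \delta_i^2}$ (reading off the exponents $2$, $1$, $0$ for $i=0,1,2$ from the three displayed cases), and polynomial dependence lying in $\mathbb{Q}[X_1,L]$ of the appropriate degree. For each vertex of genus $h$ with valence $v$, the topological CohFT $\omega_{h,v}$ contributes a product of norms $\|\mathfrak{e}_\alpha\| = (\xi^\alpha\lambda)^{-1/2}$ together with the factor $I_0\Psi_{0\bar\alpha}$, and the Euler-characteristic counting of these half-integral powers of $\xi^\alpha\lambda$ is what must reassemble into the integer exponent $g-1+\sum_j i_j - d$ of $\lambda$ and the integer exponent $2g-2+\sum_j(2\delta_{i_j}^1+\delta_{i_j}^2)$ of $I_{00}/L$. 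For each edge I would expand the edge bilinear form $V(\psi',\psi'') = \sum \frac{1-R(-\psi')R(-\psi'')}{\psi'+\psi''}$ against the twisted pairing~\eqref{paring-matrix}, again extracting its $\lambda$-power and its $(I_{00}/L)$-content.

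The central mechanism is a summation over the fixed-point index $\alpha$ at each vertex. Because the equivariant parameter has been specialized to $\lambda_\alpha = \xi^\alpha\lambda$ with $\xi = e^{2\pi i/3}$, each vertex produces, after multiplying the $(\xi^\alpha\lambda)$-powers coming from its incident legs, edges, and its own $\Psi$-normalization, a sum $\sum_{\alpha=0}^{2}\xi^{\alpha m}(\cdots)$ for some integer $m$ depending on the local degree data. Such a sum vanishes unless $m\equiv 0\pmod 3$, and when it survives it produces a rational multiple of $\lambda^{m/3}$; this is precisely the arithmetic behind the ``$0\pmod 3$'' dichotomy already seen in Proposition~\ref{Rstar1}. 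I would verify that across any stable graph the total $\xi$-exponent is forced to be divisible by $3$ so that the correlator is genuinely a power series in $q$ valued in $\mathbb{Q}[X_1,L]$, and that the resulting integer power of $\lambda$ matches $g-1+\sum_j i_j - d$ by a global Riemann-Roch / dimension count relating the virtual dimension, the number of edges, and the genus.

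\textbf{The main obstacle} I expect is not any single factor but the global degree-matching: one must show that the $L$-degree of each monomial, the $(I_{00}/L)$-exponent, and the $\lambda$-exponent all add up correctly over an \emph{arbitrary} stable graph, independently of its combinatorial shape. The delicate points are (a) confirming that the $X_2$-elimination $X_2 = -X_1^2 - \tfrac12 Y_2$ together with $Y_k\in\mathbb{Q}[Y_1,Y_2,Y_3]$ keeps every $q\frac{d}{dq}$-derivative arising from the edge and leg operators inside $\mathbb{Q}[X_1,L]$ rather than forcing new generators, so that only $X_1$ and $L$ survive; and (b) checking that the complex cohomological degree $d$ in $A^d(\overline{\mathcal{M}}_{g,n};\mathbb{Q})$ is exactly matched by the $L$-degree $d$ of the polynomial coefficient, which follows because each power of $z$ in the $R$-matrix expansion is paired with exactly one $\psi$-class or node-smoothing class and simultaneously raises the $L$-degree by one via the recursion~\eqref{Rrecursion}. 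I would prove this by induction on the number of edges of the graph, treating the leg, edge, and vertex insertions as the inductive building blocks and invoking the degree statements of the preceding propositions at each step.
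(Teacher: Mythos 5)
Your proposal is correct and follows essentially the same route as the paper's proof: expand $\Omega^{\tau(q)}_{g,n}$ by the Givental--Teleman graph sum and track, factor by factor (vertices via $\Delta_\alpha^{(2g_v-2)/2}=(\xi^\alpha\lambda)^{g_v-1}(I_{00}/L)^{2g_v-2}$, legs via the $R$-matrix structure proposition, edges via the inverse pairing, plus the kappa tails), the power of $\lambda$, the exponent of $I_{00}/L$, and the $\mathbb{Q}[X_1,L]_{\deg=d}$ coefficient, and then sum over the fixed-point indices so that all $\xi^\alpha$ factors become rational numbers. Two minor slips do not affect the argument: a surviving fixed-point sum contributes only a rational constant, not a power $\lambda^{m/3}$ (the $\lambda$-exponent $g-1+\sum_j i_j-d$ comes from the separate degree count, which must also include the kappa-tail degree $t$), and your induction on the number of edges is unnecessary, since the paper closes the count directly for an arbitrary stable graph using $\sum_v(g_v-1)+|E|=g-1$.
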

\begin{proof}
For simplicity, we first compute the contribution of the trivial stable graph
to ${\Omega^{\tau(q)}}_{g,n}(H^{i_1},...,H^{i_n})$:
\begin{align*}
&(T\omega)_{g,n}(R(z)^{-1}(H^{i_1}),..,R(z)^{-1}(H^{i_n}))
\\=&\sum_{k=0}^{\infty}\sum_{\alpha}{\Delta_\alpha}^{\frac{2g-2}{2}}\frac{1}{k!}(\pi_k)_*
\Big({R(-z)_{i_1}}^{\alpha}...{R(-z)_{i_n}}^{\alpha}
\\&\hspace{150pt}\psi_{n+1}\left[{\Psi_{0}}^{\alpha}-{R(-\psi_{n+1})_{0}}^{\alpha}\right]
...
\psi_{n+k}\left[{\Psi_{0}}^{\alpha}-{R(-\psi_{n+k})_{0}}^{\alpha}\right]\Big)
\end{align*}
Recall
\[{\Delta_{\alpha}}^{\frac{1}{2}}={\Psi_{0\bar\alpha}}^{-1}
=(\xi^\alpha\lambda)^{\frac{1}{2}}\frac{I_{0}}{L}\]
so
\[{\Delta_{\alpha}}^{\frac{2g-2}{2}}
=(\xi^\alpha\lambda)^{g-1}\left(\frac{I_{0}}{L}\right)^{2g-2}
\]
then the degree $d$ term equals to
\begin{align*}
&\left[(T\omega)_{g,n}(R(z)^{-1}(H^{i_1}),..,R(z)^{-1}(H^{i_n}))\right]_{\deg=d}
\\=&[z^d]\sum_{k=0}^{\infty}\sum_{\alpha}{\Delta_\alpha}^{\frac{2g-2}{2}}\frac{1}{k!}(\pi_k)_*
\Big({R(-z)_{i_1}}^{\alpha}...{R(-z)_{i_n}}^{\alpha}
\\&\hspace{150pt}\psi_{n+1}\left[{\Psi_{0}}^{\alpha}-{R(-z)_{0}}^{\alpha}\right]
...
\psi_{n+k}\left[{\Psi_{0}}^{\alpha}-{R(-z)_{0}}^{\alpha}\right]\Big)
\\ \in& \lambda^{g-1+\sum_{j=1}^{n}i_j-d} \left(\frac{I_{00}}{L}\right)^{2g-2+\sum_{j=1}^{n}
(2\delta_{i_{j}}^{1}+\delta_{i_j}^{2})} \mathbb{Q}[X_1,L]_{\deg=d}\otimes A^{d}(\overline{\mathcal{M}}_{g,n};\mathbb{Q})
\end{align*}
For general genus-$g$, $n$ marking  stable graph $\Gamma$, the associated contribution of $\Gamma$
 in Givental-Teleman graph sum formula  is
\begin{itemize}
\item{On each vertex, the contribution is
\[\omega_{g_v,n_v}(e_\alpha,...,e_\alpha)={\Delta_{\alpha}}^{\frac{2g_v-2}{2}}
=(\xi^\alpha\lambda)^{\frac{2g_v-2}{2}}\left(\frac{I_{0}}{L}\right)^{2g_v-2}\]}
\item{On the legs, the contribution is
\[\langle R(z)^{-1}H^i,e^\alpha\rangle={R(-z)_i}^\alpha\]}
\item{On the kappa tails, the contribution is
\[\psi({\Psi_0}^\alpha-{R(-\psi)_0}^\alpha)\]}
\item{On the edges, the contribution is
\begin{align*}
\langle V(z,w),e^\alpha\otimes e^\beta\rangle
=\sum_{i,j=0}^{2}\frac{\eta^{ij}}{z+w}
\left({\Psi_i}^\alpha\cdot{\Psi_j}^\beta-{R(-z)_i}^\alpha\cdot {R(-w)_j}^\beta\right)
\end{align*}
where $(\eta^{ij})$ is the inverse matrix of the paring matrix \ref{paring-matrix}. Notice that
\[[z^k w^l]\left(\sum_{i,j=0}^{2}\eta^{ij}{R(-z)_i}^{\alpha}{R(-w)_j}^{\beta}\right)
\in \lambda^{-k-l+1}(\xi^\alpha)^{-k+i}(\xi^\beta)^{-l+j}
\frac{{I_{00}}^2}{L^2}\cdot\mathbb{Q}[L]_{\deg=k+l}.\]
}
\end{itemize}
%\[g=\sum_{v}g_v+h^{1}(\Gamma)=\sum_{v}(g_v-1)+|E|+1\]
%\[1-h^{1}(\Gamma)=|V|-|E|\]

Assume the kappa tails contributes degree $t$ algebraic cohomological classes. Then we can analysis each factor in the contribution of $\Gamma$ to $\left[{\Omega^{\tau(q)}}_{g,n}\left(R(z)^{-1}(H^{i_1}),..,R(z)^{-1}(H^{i_n})
\right)\right]_{\deg=d}$:
The power of $\lambda$ is
\[\sum_{v}(g_v-1)+|E|+\sum_{j=1}^{n}i_j-(d-t)-t=g-1+\sum_{j=1}^{n}i_j-d.\]
The power of $\frac{I_{00}}{L}$ is
\[\sum_{v}(2g_v-2)+2|E|+\sum_{j=1}^{n}(2\delta_{i_j}^1+\delta^2_{i_j})
=2g-2+\sum_{j=1}^{n}(2\delta_{i_j}^1+\delta^2_{i_j}).\]
The rest coefficients lies in the the ring
\[\mathbb{Q}[L]_{\deg=t}\cdot \mathbb{Q}[X_1,L]_{\deg=d-t}\subset
\mathbb{Q}[X_1,L]_{\deg=d}.\]
Lastly, since the graph sum formula is symmetric to any index $\alpha$, so after taking summation, the factors $\xi^{\alpha},\xi^{\beta}...$ become some rational numbers.
\end{proof}
\section{Quasi-modularity property}\label{sec:modular}
\subsection{Review of quasi-modular forms of $\Gamma_0(3)$}
Let
\begin{align*}
  a(Q) &= \sum_{n, m \in \mathbb Z} Q^{n^2 + nm + m^2} \\
  b(Q) &= \sum_{n, m \in \mathbb Z} \omega^{n - m} Q^{n^2 + nm + m^2}
\end{align*}
be the cubic AGM theta functions (c.f. \cite{garvan1994cubic}). The ring of modular form and quasi-modular form of $\Gamma_0(3)$ is
\[\text{Mod}(\Gamma_0(3))=\mathbb{Q}[a(Q)^2,E_4(Q),E_6(Q)]\]
\[\text{QMod}(\Gamma_0(3))=\mathbb{Q}[a(Q)^2,E_2(Q),E_4(Q),E_6(Q)]\]
where $E_2,E_4,E_6$ are the Eisenstein series of weight 2,4,6 respectively.
\begin{remark}
By computing finitely many Fourier coefficients, we can check that  $a^2(Q)$, $a(Q) b^3(Q)$ and $b^6(Q)$ is another set of generators of the ring of modular form of $\Gamma_0(3)$ of weight 2,4 and 6 respectively.
\end{remark}

Since the ring of quasi-modular form is closed under the derivative operator $Q\frac{d}{dQ}$, then by comparing finitely many Fourier coefficient, we get the following identities.
\begin{lemma}\label{lm:derab}
\begin{align*}
  Q \frac{da}{dQ} &= \frac{1}{12} a(Q) E_2(Q) + \frac{1}{4} a(Q)^3 - \frac 13 b^3(Q) \\
  Q \frac{db^3}{dQ} &= \frac{1}{4} b(Q)^3 E_2(Q) - \frac{1}{4} a(Q)^2 b(Q)^3.
\end{align*}
\end{lemma}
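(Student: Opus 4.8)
The plan is to organize the computation around the Serre derivative, which turns the already-invoked closure of $\mathrm{QMod}(\Gamma_0(3))$ under $Q\frac{d}{dQ}$ into a statement about genuine modular forms and thereby explains the shape of both right-hand sides. Recall the standard fact that for a (quasi-)modular form $f$ of weight $k$ the combination $Q\frac{df}{dQ}-\frac{k}{12}E_2 f$ is again modular of weight $k+2$, with the same nebentypus. The cubic AGM theta function $a$ is the theta series of the hexagonal lattice, hence a modular form of weight $1$ for $\Gamma_0(3)$ with the quadratic character $\chi$, and by the cubic AGM relation $a^3=b^3+c^3$ (c.f. \cite{garvan1994cubic}) the forms $a^3$ and $b^3$ are modular of weight $3$ with that same character $\chi$. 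Thus $b^3$ has weight $3$ and character $\chi$, and the two identities to be proved have left-hand sides of weights $3$ and $5$ respectively, each with character $\chi$.

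For the first identity I would set $\vartheta_1 a:=Q\frac{da}{dQ}-\frac{1}{12}E_2 a$, which is modular of weight $3$ and character $\chi$. The space $\mathrm{Mod}_3^{\chi}(\Gamma_0(3))$ is spanned by $\{a^3,b^3\}$ (it is in fact two-dimensional), so $\vartheta_1 a=\lambda a^3+\mu b^3$ for rationals $\lambda,\mu$. Matching the constant and linear Fourier coefficients, using $a=1+6Q+\cdots$, $b=1-3Q+\cdots$ and $E_2=1-24Q+\cdots$, gives a small linear system whose unique solution is $\lambda=\tfrac14$, $\mu=-\tfrac13$; rearranging yields the first identity. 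The coefficient $\tfrac{1}{12}=\tfrac{k}{12}$ of $aE_2$ is forced by the Serre derivative with $k=1$, which is why it appears with exactly that value.

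For the second identity I would likewise form $\vartheta_3(b^3):=Q\frac{db^3}{dQ}-\frac{3}{12}E_2 b^3$, a modular form of weight $5$ and character $\chi$; the claim is then equivalent to $\vartheta_3(b^3)=-\tfrac14 a^2 b^3$, and $a^2 b^3$ is indeed modular of weight $5$ and character $\chi$ (since $a^2$ is modular of weight $2$ with trivial character). After fixing a basis of $\mathrm{Mod}_5^{\chi}(\Gamma_0(3))$ containing $a^2 b^3$, I would verify that the $Q$-expansion of $\vartheta_3(b^3)+\tfrac14 a^2 b^3$ vanishes up to the Sturm bound for this space, which forces it to vanish identically. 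I expect the only real obstacle to be the finite-dimensional bookkeeping: one must confirm the weight and nebentypus of every term, produce explicit bases of $\mathrm{Mod}_3^{\chi}$ and $\mathrm{Mod}_5^{\chi}$, and expand far enough to reach the relevant Sturm bound. Granting that, both identities reduce to the mechanical comparison of finitely many Fourier coefficients already announced in the text.
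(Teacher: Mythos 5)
Your proposal is correct and follows essentially the same route as the paper: both invoke the Serre derivative $Q\frac{df}{dQ}-\frac{k}{12}E_2 f$ to land in a space of genuine modular forms of weight $k+2$ for $\Gamma_0(3)$ with the same character, and then pin down the identities by comparing finitely many Fourier coefficients. Your version merely makes explicit what the paper leaves implicit (the weights and nebentypus of $a$, $b^3$, $a^2b^3$, the bases of the target spaces, and the Sturm bound justifying that finitely many coefficients suffice).
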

\begin{proof}
If $f(Q)$ is a modular form of weight $k$ and level $n$ for some
character $\chi$, then the Serre derivative
\begin{equation*}
  Q \frac{df}{dQ} - \frac{k}{12} E_2(Q) f(Q)
\end{equation*}
is a modular form of weight $k + 2$ and level $n$ for the same
character.
Computing finitely many Fourier coefficient, we find the identities.
\end{proof}
\subsection{Relate generators to quasi-modular form}
Recall that
\begin{equation*}
  I_0(q) = \sum_{d = 0}^\infty q^d \frac{(3d)!}{(d!)^3} = {}_2 F_1\left(\frac 13, \frac 23; 1; 27q\right),
\end{equation*}
and
\begin{equation*}
  L(q) = (1 - 27q)^{-\frac 13}, \qquad I_{11}(q) = \frac{L^3}{I_0^2}.
\end{equation*}
On the other hand, the cubic AGM theta function
\begin{equation*}
  a(Q) = {}_2 F_1\left(\frac 13, \frac 23; 1; 1 - \frac{b^3(Q)}{a^3(Q)}\right).
\end{equation*}
Naturally we take the  identification
\begin{align}\label{identi}
  27q = 1 - \frac{b^3(Q)}{a^3(Q)}.
\end{align}
then
\begin{align}\label{aLmod}
a(Q) = I_0(q),\quad   L^{-3}(q) = \frac{b^3(Q)}{a^3(Q)}.
\end{align}
Moreover, we have the following lemma
\begin{lemma}
Under the identification \eqref{identi}, the variable $Q$ is exactly the mirror map
\[Q=q\exp\left(\frac{I_1(q)}{I_0(q)}\right).\]
\end{lemma}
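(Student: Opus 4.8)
The plan is to identify the two power series $Q$ and $\tilde Q := q\exp\!\big(I_1(q)/I_0(q)\big)$, viewed as functions of $q$ near the origin, by showing that they have the same logarithmic $q$-derivative and the same leading term at $q=0$. First I would compute the logarithmic derivative of the mirror map. Recalling from the quantum differential equation that $I_{1,1}=1+q\frac{d}{dq}\big(I_1/I_0\big)$ (this is the coefficient of $H$ in $\dot\tau = H + q\frac{d\tau}{dq}=I_{1,1}H$), one gets
\[
q\frac{d}{dq}\log\tilde Q = 1 + q\frac{d}{dq}\frac{I_1}{I_0} = I_{1,1} = \frac{L^3}{I_0^2}.
\]
Using the identifications \eqref{aLmod}, namely $a=I_0$ and $L^{-3}=b^3/a^3$, this rewrites cleanly as
\[
q\frac{d}{dq}\log\tilde Q = \frac{L^3}{I_0^2} = \frac{a^3/b^3}{a^2} = \frac{a}{b^3}.
\]

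Next I would compute $q\frac{d}{dq}\log Q$ directly from the defining relation \eqref{identi}. Applying $Q\frac{d}{dQ}$ to $27q = 1 - b^3/a^3$ and substituting the derivative identities of Lemma~\ref{lm:derab} for $Q\frac{da}{dQ}$ and $Q\frac{db^3}{dQ}$, the quotient rule collapses (after the $E_2$ terms cancel) to
\[
27\,Q\frac{dq}{dQ} = -\,Q\frac{d}{dQ}\frac{b^3}{a^3} = \frac{b^3(a^3-b^3)}{a^4}.
\]
Since $a^3-b^3 = 27q\,a^3$ by \eqref{identi}, this simplifies to $Q\frac{dq}{dQ} = qb^3/a$, and inverting gives $q\frac{d}{dq}\log Q = a/b^3$, which matches the expression found for $\tilde Q$.

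Consequently $\frac{d}{dq}\log(\tilde Q/Q)=0$, so $\tilde Q/Q$ is independent of $q$. To pin down the constant as $1$, I would match the two series at $q=0$: one has $\tilde Q = q\exp(O(q)) = q + O(q^2)$ immediately from $I_1(0)=0$ and $I_0(0)=1$, while for $Q$ the order-$Q$ Fourier coefficients give $a(Q)=1+6Q+\cdots$ and $b(Q)=1-3Q+\cdots$, so $1-b^3/a^3 = 27Q + O(Q^2)$ and hence $Q = q + O(q^2)$. The leading coefficients agree, forcing $\tilde Q = Q$, which is the claim.

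The main obstacle is the middle computation, the algebraic simplification of $Q\frac{d}{dQ}(b^3/a^3)$. The crucial feature is that all $E_2$ contributions cancel, which is exactly what must occur because $27q = 1 - b^3/a^3$ is a weight-$0$ modular function (a Hauptmodul for $\Gamma_0(3)$) and is therefore genuinely modular rather than merely quasi-modular; checking this cancellation and tracking the remaining weight-$3$ combinations is the only non-routine part of the argument.
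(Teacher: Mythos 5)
Your proof is correct and takes essentially the same route as the paper: both arguments use the derivative identities of Lemma~\ref{lm:derab} to show that $Q$ and $q\exp\left(I_1/I_0\right)$ satisfy the same first-order ODE (equivalently, both have logarithmic $q$-derivative $a/b^3$, since $I_{11}=L^3/I_0^2=a/b^3$ under the identification), and then fix the multiplicative constant by matching leading coefficients. Your version simply makes explicit the $E_2$-cancellation in $Q\frac{d}{dQ}\left(b^3/a^3\right)$ and the initial condition $Q=q+O(q^2)$, both of which the paper leaves implicit.
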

\begin{proof}
By definition, $x = q e^{\frac{I_1}{I_0}}$ satisfies the differential equation
\begin{equation}
  \label{eq:DE}
  q \frac{dx}{dq} = I_{11} x = \frac{L^3}{I_0^2} x
\end{equation}
Using lemma~\ref{lm:derab}, the differential equation~\eqref{eq:DE} simplifies to
\begin{equation*}
  Q \frac{dx}{dQ} = x.
\end{equation*}
Since $Q$ has no constant term when written in terms of $x$, this
implies that $x = Q$.
\end{proof}
Now we can relate the basic generators to the quasi-modular from of $\Gamma_0(3)$.
\begin{lemma}\label{lem:mod-gene}
\begin{align*}
\frac{{I_{00}}^2}{L^2} X_{1}&=\frac{1}{12}E_2(Q)-\frac{1}{12}a(Q)^2\\
\frac{{I_{00}}^2}{L^2} Y_{1}&=\frac{1}{3}a(Q)^2\\
\left(\frac{{I_{00}}^2}{L^2}\right)^2Y_2&=-\frac{1}{36}a(Q)^4+\frac{1}{36}E_4(Q)\\
\left(\frac{{I_{00}}^2}{L^2}\right)^3Y_3&=\frac{1}{216}a(Q)^6
+\frac{1}{216}a(Q)^2E_4(Q)-\frac{1}{108}E_6(Q)
\end{align*}
\end{lemma}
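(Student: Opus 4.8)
The plan is to transport all four generators to the modular variable $Q$ via the change of variables \eqref{identi}, and then identify the resulting expressions using the derivative rules of Lemma~\ref{lm:derab}. Three preliminary reductions make this largely mechanical. First, \eqref{aLmod} together with $I_{00}=I_0$ gives $I_0=a(Q)$ and $L=a(Q)/b(Q)$, so that
\[
\frac{I_{00}}{L}=b(Q),\qquad \frac{I_{00}^2}{L^2}=b(Q)^2,\qquad bL=a.
\]
Second, the mirror-map ODE \eqref{eq:DE}, namely $q\frac{dQ}{dq}=I_{11}Q=\frac{L^3}{I_0^2}Q$, turns the operator building the generators into the standard modular derivative:
\[
L^{-1}q\frac{d}{dq}=\frac{L^2}{I_0^2}\,Q\frac{d}{dQ}=\frac{1}{b^2}\,Q\frac{d}{dQ}.
\]
Third, a direct computation from $L=(1-27q)^{-1/3}$ yields $q\frac{d}{dq}L=\frac13 L(L^3-1)$, which I will use to evaluate the $Y$-generators in closed form.

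For $X_1$, since $I_{00}/L=b$ the operator identity gives $X_1=\frac{1}{b^2}Q\frac{d}{dQ}\ln b=\frac{1}{3b^5}Q\frac{db^3}{dQ}$. Lemma~\ref{lm:derab} supplies $Q\frac{db^3}{dQ}=\frac14 b^3(E_2-a^2)$, so that
\[
\frac{I_{00}^2}{L^2}X_1=b^2X_1=\frac{1}{12}\bigl(E_2(Q)-a(Q)^2\bigr),
\]
which is the first identity; the $E_2$ term survives precisely because $X_1$ carries the non-modular part.

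For the $Y$-generators I would apply $L^{-1}q\frac{d}{dq}$ repeatedly to $\ln(q^{1/3}L)$, using $q\frac{d}{dq}L=\frac13 L(L^3-1)$ at each step. The first application gives $q\frac{d}{dq}\ln(q^{1/3}L)=\frac13+\frac13(L^3-1)=\frac13 L^3$, hence $Y_1=\frac13 L^2$; iterating yields
\[
Y_1=\frac13 L^2,\qquad Y_2=\frac29(L^4-L),\qquad Y_3=\frac{2}{27}(4L^6-5L^3+1).
\]
Multiplying by the relevant power of $\frac{I_{00}^2}{L^2}=b^2$ and using $bL=a$ collapses each to a weighted-homogeneous polynomial in $a,b$:
\[
b^2Y_1=\frac13 a^2,\qquad b^4Y_2=\frac29(a^4-ab^3),\qquad b^6Y_3=\frac{2}{27}(4a^6-5a^3b^3+b^6).
\]
The first equality is already the claimed $Y_1$-identity.

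The remaining and only substantial step is to recognize the weight-$4$ and weight-$6$ combinations $a^4-ab^3$ and $4a^6-5a^3b^3+b^6$ in terms of the Eisenstein series. Since $\{a^4,ab^3\}$ and $\{a^6,a^3b^3,b^6\}$ span the spaces of modular forms of $\Gamma_0(3)$ of weights $4$ and $6$, it suffices to establish
\[
E_4=9a^4-8ab^3,\qquad E_6=-27a^6+36a^3b^3-8b^6,
\]
which I would verify by matching finitely many Fourier coefficients, exactly as in the proof of Lemma~\ref{lm:derab}. Substituting these into $b^4Y_2$ and $b^6Y_3$ produces $-\frac{1}{36}a^4+\frac{1}{36}E_4$ and $\frac{1}{216}a^6+\frac{1}{216}a^2E_4-\frac{1}{108}E_6$, which are the last two claimed identities. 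The expected obstacle is exactly this bridge between the $a,b$-description and the Eisenstein series: the differential relations of Lemma~\ref{lm:derab} pin down derivatives but not the absolute normalizations of $E_4,E_6$, so one genuinely needs the Fourier-coefficient comparison (or, alternatively, differentiates the $X_1$-identity and uses the Ramanujan relation $Q\frac{dE_2}{dQ}=\frac{1}{12}(E_2^2-E_4)$ to solve for $E_4$, then iterates for $E_6$).
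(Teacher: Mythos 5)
Your proposal is correct and follows the paper's own route: use \eqref{aLmod} to identify $I_0=a$, $I_{00}/L=b$, convert $L^{-1}q\frac{d}{dq}$ into $\frac{1}{b^2}Q\frac{d}{dQ}$ via the mirror-map ODE \eqref{eq:DE}, and apply Lemma~\ref{lm:derab} — your $X_1$ computation is literally the one in the paper, and your closed forms $Y_1=\tfrac13L^2$, $Y_2=\tfrac29(L^4-L)$, $Y_3=\tfrac{2}{27}(4L^6-5L^3+1)$ are consistent with the identities recorded in Section~\ref{subsec:generator}. The one place you go beyond the paper's terse ``the second and third ones follow from Lemma~\ref{lm:derab} and \eqref{aLmod}'' is in making explicit, and verifying by Fourier-coefficient comparison, the bridge identities $E_4=9a^4-8ab^3$ and $E_6=-27a^6+36a^3b^3-8b^6$ (both of which check out, e.g.\ to order $Q^2$); this is a genuine completion rather than a different method, since Lemma~\ref{lm:derab} alone only controls $E_2$ and some such weight-$4$ and weight-$6$ identification is unavoidably needed for the last two formulas.
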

\begin{proof}
By lemma~\ref{lm:derab}
\begin{align*}
  X_1 =\frac qL \frac d{dq} \log\left(\frac{I_0}L\right)
  = \frac{L^2}{3 I_0^2} Q \frac d{dQ} \log\left(b^3\right)
  = \frac 14 \frac{L^2}{I_0^2} E_2(Q^3) - \frac 14 L^2.
\end{align*}
The second and third ones follow from lemma\ref{lm:derab} and equations~\eqref{aLmod}.
\end{proof}
\begin{remark}
From lemma~\ref{lem:mod-gene}, we see that
\[
\frac{{I_{00}}^2}{L^2} Y_{1},\quad
\left(\frac{{I_{00}}^2}{L^2}\right)^2Y_2,\quad
\left(\frac{{I_{00}}^2}{L^2}\right)^3Y_3\]
is another set of generators of the ring of modular form of $\Gamma_0(3)$. And
\[\frac{{I_{00}}^2}{L^2} X_{1},\quad
\frac{{I_{00}}^2}{L^2} Y_{1},\quad
\left(\frac{{I_{00}}^2}{L^2}\right)^2Y_2,\quad
\left(\frac{{I_{00}}^2}{L^2}\right)^3Y_3\]
is another set of generators of the ring of quasi-modular form of $\Gamma_0(3)$.
\end{remark}
\subsection{Proof of quasi-modularity property of the twisted theory $\mathcal{O}(3)$ over $\mathbb{P}^2$}
First, we study the relations between basic generators and modular forms.
Taking $d=g-1+n$ in the theorem~\ref{finitgen}, combining the lemma~\ref{lem:mod-gene}, we obtain the following quasi-modularity property for twisted theory $\mathcal{O}(3)$ over $\mathbb{P}^2$.
\begin{corollary}
For $\lambda$-twisted theory $\mathcal{O}(3)$ over $\mathbb{P}^2$,
the degree $g-1+n$ part of ${\Omega^{\tau(q)}}_{g,n}(H,...,H)$ is a cycle-valued quasi modular form of $\Gamma_{0}(3)$ with weight $2g-2+2n$.
\[{\Omega^{\tau(q)}}_{g,n}(H,...,H)\cap A^{g-1+n}(\overline{\mathcal{M}}_{g,n};\mathbb{Q})\in \text{QMod}(\Gamma_0(3))_{2g-2+2n}\otimes_{\mathbb{Q}} A^{g-1+n}(\overline{\mathcal{M}}_{g,n};\mathbb{Q}).\]
\end{corollary}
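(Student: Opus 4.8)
The plan is to obtain the statement as a direct consequence of Theorem~\ref{finitgen} and Lemma~\ref{lem:mod-gene}, so the work is entirely bookkeeping of weights and powers. First I would specialize Theorem~\ref{finitgen} to the all-$H$ insertions $i_1 = \cdots = i_n = 1$ and to the top cohomological degree $d = g-1+n$. Then $\sum_j i_j = n$ and $\sum_j (2\delta^1_{i_j} + \delta^2_{i_j}) = 2n$, so the exponent of $\lambda$ is $g-1+n-d = 0$ and the exponent of $I_{00}/L$ is $2g-2+2n$. Thus all $\lambda$-dependence disappears and
\[
\left[\Omega^{\tau(q)}_{g,n}(H,\ldots,H)\right]_{g-1+n} \in \left(\frac{I_{00}}{L}\right)^{2g-2+2n}\mathbb{Q}[X_1,L]_{\deg=g-1+n}\otimes A^{g-1+n}(\overline{\mathcal{M}}_{g,n};\mathbb{Q}),
\]
reducing the claim to showing that the scalar coefficient ring maps into $\mathrm{QMod}(\Gamma_0(3))_{2g-2+2n}$.

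Next I would record the grading on $\mathbb{Q}[X_1,L]$. By definition $\deg X_1 = 1$, while $L^2 = 3Y_1$ forces $\deg L = \tfrac12$; hence a monomial of total degree $g-1+n$ has the form $X_1^p L^{2(g-1+n-p)}$, so only even powers of $L$ occur. Under the identification \eqref{identi} with mirror map $Q = q e^{I_1/I_0}$, equation \eqref{aLmod} gives $I_{00} = a(Q)$, and Lemma~\ref{lem:mod-gene} supplies the two key conversions
\[
\frac{I_{00}^2}{L^2}\,X_1 = \frac{1}{12}\bigl(E_2(Q) - a(Q)^2\bigr), \qquad \frac{I_{00}^2}{L^2}\,L^2 = I_{00}^2 = a(Q)^2,
\]
the first a weight-$2$ quasi-modular form and the second a weight-$2$ modular form of $\Gamma_0(3)$.

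The regrouping is then immediate. Writing the prefactor as $(I_{00}/L)^{2g-2+2n} = (I_{00}^2/L^2)^{g-1+n}$ and distributing it over a monomial of top degree,
\[
\left(\frac{I_{00}^2}{L^2}\right)^{g-1+n} X_1^p\, L^{2(g-1+n-p)} = \left(\frac{I_{00}^2}{L^2}X_1\right)^p \left(\frac{I_{00}^2}{L^2}L^2\right)^{g-1+n-p} = \frac{1}{12^p}\bigl(E_2 - a^2\bigr)^p (a^2)^{\,g-1+n-p},
\]
which is a quasi-modular form of weight $2p + 2(g-1+n-p) = 2g-2+2n$ lying in $\mathbb{Q}[a^2,E_2]\subset \mathrm{QMod}(\Gamma_0(3))$. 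Summing over the monomials appearing in $\mathbb{Q}[X_1,L]_{\deg=g-1+n}$ and tensoring with the corresponding class in $A^{g-1+n}(\overline{\mathcal{M}}_{g,n};\mathbb{Q})$ gives the corollary.

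I expect the only subtle point to be the weight bookkeeping in the middle step: one must verify that in the top graded piece $d = g-1+n$ the half-integer degree of $L$ really pins the $L$-exponent to $2(g-1+n-p)$, so that the $g-1+n-p$ surplus factors of $I_{00}^2/L^2$ exactly absorb all $L$-powers into $a(Q)^2$ and no uncancelled factor of $1/L$ (which would spoil holomorphicity at the cusp) is left over. Granting this cancellation, the reduction to $\Gamma_0(3)$ quasi-modular forms is a formal consequence of Lemma~\ref{lem:mod-gene}.
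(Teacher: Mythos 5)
Your overall route is the same as the paper's: specialize Theorem~\ref{finitgen} to $i_1=\dots=i_n=1$ and $d=g-1+n$, observe that the $\lambda$-power is $0$ and the prefactor is $\left(I_{00}/L\right)^{2g-2+2n}$, then convert the coefficient ring into quasi-modular forms via Lemma~\ref{lem:mod-gene}. That part of the bookkeeping is correct. The gap is your middle step: the claim that $\deg L=\tfrac12$, so that the degree-$(g-1+n)$ piece of $\mathbb{Q}[X_1,L]$ consists only of monomials $X_1^p L^{2(g-1+n-p)}$ with even, maximal $L$-exponent. That is not what Theorem~\ref{finitgen} actually provides, and it is contradicted by the paper's own computations: by Proposition~\ref{Rstar1}, ${(R_k)_0}^{\alpha}\in(\xi^{\alpha}\lambda)^{-k}\,\mathbb{Q}\{L^{2k-3j}:0\le j\le\lfloor\frac{2}{3}k\rfloor\}$; for instance ${(R_3)_0}^{\alpha}$ contains $L^6$, $L^3$ \emph{and} $L^0$. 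These deficient powers enter the Givental--Teleman graph sum, so the degree-$d$ coefficient contains monomials $X_1^pL^{2(d-p)-3j}$ with $j\ge 1$. The ``degree'' in $\mathbb{Q}[X_1,L]_{\deg=d}$ is the bookkeeping in which $L^2=3Y_1$, $L=9Y_1^2-\tfrac92Y_2$ and $1=27Y_1^3-\tfrac{135}{2}Y_1Y_2+\tfrac{27}{2}Y_3$ count as degrees $1,2,3$ (so $L^m$ occurs in degree $\delta$ whenever $2\delta-m\in 3\mathbb{Z}_{\ge0}$); it is not a half-integer grading on $L$. Under your reading, Theorem~\ref{finitgen} itself would be false, since its proof manifestly produces $L^3$-terms; your proof of the corollary simply never treats these monomials, and in fact asserts they cannot occur.

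This omission is exactly where the content of the corollary lies. For the missing monomials, using $I_{00}=a(Q)$, $L=a(Q)/b(Q)$, hence $I_{00}^2/L^2=b(Q)^2$, your regrouping gives
\begin{equation*}
\left(\frac{I_{00}^2}{L^2}\right)^{d}X_1^{p}\,L^{2(d-p)-3j}
=\frac{1}{12^{p}}\bigl(E_2-a^2\bigr)^{p}\,a^{2(d-p)-3j}\,(b^3)^{j},
\end{equation*}
which still has pure weight $2d=2g-2+2n$, but lies in $\text{QMod}(\Gamma_0(3))$ \emph{only because} the $b$-exponent is a multiple of $3$: one needs that $a^2$, $ab^3$, $b^6$ generate $\text{Mod}(\Gamma_0(3))$ (the paper's remark on the cubic theta functions), whereas $(I_{00}^2/L^2)\cdot 1=b^2$ and $(I_{00}^2/L^2)\cdot L=ab$ are \emph{not} $\Gamma_0(3)$-forms with trivial character --- if the $L$-exponent could drop by amounts not divisible by $3$, the statement would fail. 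So the mod-$3$ pattern from Proposition~\ref{Rstar1} is essential, and handling it requires the $Y_2$- and $Y_3$-identities of Lemma~\ref{lem:mod-gene} (which is precisely why the paper proves them), or equivalently rewriting $L^{2(d-p)-3j}$ as a homogeneous degree-$(d-p)$ polynomial in $Y_1,Y_2,Y_3$ before converting. Once this case is added, your argument closes; as written, it only proves the statement for the sub-sum of monomials with maximal $L$-power.
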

%\begin{remark}
%Similar calculations give us
%the quasi-modularity of twisted theory $\mathcal{O}(-3)$ over $\mathbb{P}^2$. The main difference is the complex degree $3g-3+n$ part of ${\Omega^{\tau(q)}}_{g,n}(H,...,H)$ is a cycle valued meromorphic modular form, i.e.
%\[{\Omega^{\tau(q)}}_{g,n}(H,...,H)\cap A^{3g-3+n}(\overline{\mathcal{M}}_{g,n};\mathbb{Q})\in \text{Frac}\left(\text{QMod}(\Gamma_1(3))\right)_{-n}\otimes_{\mathbb{Q}} A^{3g-3+n}(\overline{\mathcal{M}}_{g,n};\mathbb{Q}).\]
%and
%\[{\Omega^{\tau(q)}}_{g}\cap A^{3g-3}(\overline{\mathcal{M}}_{g};\mathbb{Q})\in \text{Frac}\left(\text{QMod}(\Gamma_0(3))\right)_{0}\otimes_{\mathbb{Q}} A^{3g-3}(\overline{\mathcal{M}}_{g};\mathbb{Q}).\]
%Moreover $F_g^{K_{\mathbb{P}^2}}(\tau(q),q)
%=\int_{\mathcal{M}_{g}}{\Omega^{\tau(q)}}_{g}$ can be expressed as a degree $3g-3$ polynomial of $E_2$, with each coefficients are the rational function of $a(Q)^2, E_4(Q),E_6(Q)$.
%\end{remark}
\begin{remark}
For other degree parts, we can also express the coefficients  as the rational function of the generators of quasi-modular form of ring $\Gamma_0(3)$.
\end{remark}
\section{Holomorphic anomaly equation}\label{sec:HAE}

In this section, we prove the holomorphic anomaly equation. This part is similar as the  proof in \cite{lho2018stable} for the case of local $\mathbb{P}^2$. The key point is the following proposition:
\begin{proposition}\label{derRV}
For $R$ matrix,
\begin{align}\label{DerR}
\frac{\partial}{\partial E_{2}} R(z)^{-1}H^{i}
=\frac{1}{12}\delta_{i,1}z R(z)^{-1}1
\end{align}
For $V$ matrix,
\begin{align}\label{DerV}
\frac{\partial}{\partial E_2}V(z,w)=-\frac{1}{36}R(z)^{-1}1\otimes R(w)^{-1}1
\end{align}
\end{proposition}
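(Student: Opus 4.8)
The plan is to reduce both identities to a single observation: among all the generators, only $X_1$ carries a dependence on $E_2$, and it does so linearly. Throughout I take $\partial/\partial E_2$ to be the derivation on $\mathrm{QMod}(\Gamma_0(3))$, extended to the differential field generated by the cubic theta functions $a(Q),b(Q)$ together with $E_2,E_4,E_6$, which sends $E_2\mapsto 1$ and annihilates $a,b,E_4,E_6$. First I would isolate two structural inputs. By Proposition~\ref{Rstar1} the entries ${(R_k)_{0}}^{\alpha}$ are, up to the factor $(\xi^\alpha\lambda)^{-k}$, polynomials in $L$ alone, and since $q\frac{d}{dq}$ preserves $\mathbb{Q}[L]$ via $L^{-1}q\frac{d}{dq}L=\frac13L^3-\frac13$, the whole column ${R(z)_{0}}^{\alpha}$ is $E_2$-free. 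By Lemma~\ref{lem:mod-gene}, $\frac{{I_{00}}^2}{L^2}X_1=\frac{1}{12}E_2-\frac{1}{12}a(Q)^2$ while $L$, $\frac{I_{00}}{L}=b(Q)$ and $a(Q)^2$ are genuine modular forms; consequently $\partial_{E_2}X_1=\frac{1}{12}\frac{L^2}{{I_{00}}^2}$, and $\partial_{E_2}$ kills $L$, $\frac{I_{00}}{L}$ and $\frac{{I_{00}}^2}{L^2}$.

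For \eqref{DerR}, I would differentiate the recursion~\eqref{Rrecursion}. In ${R(z)_{1}}^{\alpha}$ every factor except $X_1$ is $E_2$-free, so only the term $-z\frac{{I_{00}}^2}{L^2}X_1{R(z)_{0}}^{\alpha}$ survives differentiation, giving $\partial_{E_2}{R(z)_{1}}^{\alpha}=-\frac{z}{12}{R(z)_{0}}^{\alpha}$; the expression for ${R(z)_{2}}^{\alpha}$ contains no $X_1$, so $\partial_{E_2}{R(z)_{2}}^{\alpha}=0$, and trivially $\partial_{E_2}{R(z)_{0}}^{\alpha}=0$. These three equalities amount to $\partial_{E_2}{R(z)_{i}}^{\alpha}=-\frac{z}{12}\delta_{i,1}{R(z)_{0}}^{\alpha}$. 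Using the leg contribution recorded in the Givental--Teleman graph sum, $\langle R(z)^{-1}H^i,e^\alpha\rangle={R(-z)_{i}}^{\alpha}$, the substitution $z\mapsto -z$ converts this into $\partial_{E_2}\langle R(z)^{-1}H^i,e^\alpha\rangle=\frac{1}{12}\delta_{i,1}z\,{R(-z)_{0}}^{\alpha}=\frac{1}{12}\delta_{i,1}z\,\langle R(z)^{-1}1,e^\alpha\rangle$, which is exactly \eqref{DerR} read off componentwise.

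For \eqref{DerV}, I would start from the edge formula $\langle V(z,w),e^\alpha\otimes e^\beta\rangle=\sum_{i,j}\frac{\eta^{ij}}{z+w}\bigl({\Psi_i}^\alpha{\Psi_j}^\beta-{R(-z)_{i}}^{\alpha}{R(-w)_{j}}^{\beta}\bigr)$. The $\Psi$-terms are built from $L$ and $\lambda$ only (cf.~\eqref{norm}) and are $E_2$-free, so applying $\partial_{E_2}$ leaves only $-\frac{1}{z+w}\sum_{i,j}\eta^{ij}\partial_{E_2}\bigl({R(-z)_{i}}^{\alpha}{R(-w)_{j}}^{\beta}\bigr)$. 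By the product rule and the identity just established, the first factor contributes only for $i=1$ and the second only for $j=1$, so the inner sum equals $\frac{z}{12}{R(-z)_{0}}^{\alpha}\sum_j\eta^{1j}{R(-w)_{j}}^{\beta}+\frac{w}{12}{R(-w)_{0}}^{\beta}\sum_i\eta^{i1}{R(-z)_{i}}^{\alpha}$. The inverse of the pairing matrix \eqref{paring-matrix} satisfies $\eta^{1j}=\frac13\delta_{j,0}$ and $\eta^{i1}=\frac13\delta_{i,0}$, so both sums collapse and the inner sum becomes $\frac{z+w}{36}{R(-z)_{0}}^{\alpha}{R(-w)_{0}}^{\beta}$; the prefactor then cancels the $z+w$, leaving $-\frac{1}{36}{R(-z)_{0}}^{\alpha}{R(-w)_{0}}^{\beta}$. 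This cancellation is precisely what makes the right-hand side $-\frac{1}{36}R(z)^{-1}1\otimes R(w)^{-1}1$, proving \eqref{DerV}.

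The computational steps above are routine once the framework is fixed; the main obstacle is the conceptual point of the first paragraph, namely justifying that $\partial/\partial E_2$ is a well-defined derivation on the relevant ring and that $X_1$ is genuinely its only carrier among the generators. This rests entirely on Proposition~\ref{Rstar1} (the $E_2$-freeness of $R(z)^*1$) and Lemma~\ref{lem:mod-gene} (confining $E_2$ inside $X_1$); with those in hand the remainder is bookkeeping of the sign change $z\mapsto -z$ and of the explicit entries of $\eta$.
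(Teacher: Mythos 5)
Your proposal is correct and follows exactly the paper's (very terse) argument: the paper proves \eqref{DerR} by citing equation~\eqref{Rrecursion} together with Lemma~\ref{lem:mod-gene} (with Proposition~\ref{Rstar1} guaranteeing that ${R(z)_0}^{\alpha}$ is a polynomial in $L$ and hence $E_2$-free), and then deduces \eqref{DerV} from \eqref{DerR} via the edge formula, which is precisely your computation with $\eta^{1j}=\frac13\delta_{j,0}$ collapsing the sum and cancelling the $z+w$. Your write-up simply supplies the bookkeeping (the sign change $z\mapsto -z$, the explicit inverse pairing, and the $E_2$-freeness of the $\Psi$-terms) that the paper leaves implicit.
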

\begin{proof}
Equation~\eqref{DerR} follows from equation~\eqref{Rrecursion} and lemma~\ref{lem:mod-gene}. Equation~\eqref{DerV} just follows from equation~\eqref{DerR}.
\end{proof}
Using proposition~\ref{derRV} and Givental-Teleman classification theorem, we obtain
\begin{theorem}For the formal twisted theory $\mathcal{O}(3)$ over $\mathbb{P}^2$, the holomorphic anomaly equation holds
\begin{align*}
&\frac{\partial}{\partial E_{2}}{\Omega^{\tau(q)}}_{g,n}(H^{i_1},...,H^{i_n})
\\=&\frac{1}{12}\sum_{j=1}^{n}\delta_{i_j}^{1}\psi_{j}
{\Omega^{\tau(q)}}_{g,n}(H^{i_1},...,H^{i_{j-1}},1,H^{i_{j+1}}...,
H^{i_n})
\\&-\frac{1}{36}
i_{*}\left({\Omega^{\tau(q)}}_{g-1,n+2}(H^{i_1},...,H^{i_n},
1,1)
\right)
\\&-\frac{1}{36}
j_{*}\sum_{\substack{g_1+g_2=g\\S_1\sqcup S_2=\{1,...,n\}}}{\Omega^{\tau(q)}}_{g_1,|S_1|+1}
(H^{S_1},1)\otimes
{\Omega^{\tau(q)}}_{g_2,|S_2|+1}(H^{S_2},1)
\end{align*}
\end{theorem}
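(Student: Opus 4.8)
The plan is to differentiate the Givental--Teleman reconstruction $\Omega^{\tau(q)}=R_\cdot\omega$ term by term, using Proposition~\ref{derRV} to evaluate the $E_2$-derivative of each building block of the graph sum.

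First I would record that the $E_2$-dependence of the entire CohFT is housed \emph{only} in the $R$-matrix (equivalently in the edge bivector $V$). Indeed, by Lemma~\ref{lem:mod-gene} the single generator $X_1$ is the only one of $X_1,Y_1,Y_2,Y_3$ whose modular expression involves $E_2$, and by equation~\eqref{Rrecursion} together with Proposition~\ref{Rstar1} all $X_1$-dependence of the theory is carried by the $R$-matrix entries. The vertex and normalization factors are $E_2$-free: using the identification \eqref{aLmod} one has $I_{00}/L=b(Q)$, so the vertex weight $\Delta_\alpha^{(2g_v-2)/2}=(\xi^\alpha\lambda)^{g_v-1}(I_{00}/L)^{2g_v-2}$ and the dilaton-leaf factors are built from $\lambda$ and the modular quantity $b(Q)$ alone. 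Consequently $\tfrac{\partial}{\partial E_2}$ commutes past every factor of the graph sum except the legs and the edges.

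Next I would apply the Leibniz rule to the graph sum. On a leg carrying insertion $H^{i_j}$ the relevant factor is $R(z)^{-1}H^{i_j}$, and \eqref{DerR} gives $\tfrac{\partial}{\partial E_2}R(z)^{-1}H^{i_j}=\tfrac{1}{12}\delta_{i_j}^{1}\,z\,R(z)^{-1}1$. Under the reconstruction the variable $z$ at the $j$-th leg is the cotangent line $\psi_j$, while the insertion is simultaneously replaced by $1$; summing over $j$ this reassembles precisely the first term $\tfrac{1}{12}\sum_j\delta_{i_j}^{1}\psi_j\,\Omega^{\tau(q)}_{g,n}(\dots,1,\dots)$. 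On an edge the factor is $V(z,w)$, and \eqref{DerV} gives $\tfrac{\partial}{\partial E_2}V(z,w)=-\tfrac{1}{36}R(z)^{-1}1\otimes R(w)^{-1}1$. Because this derivative factorizes as a tensor product of two $R(\cdot)^{-1}1$ contributions, differentiating an edge is the same as cutting it: the edge is removed and its two endpoints become new legs, each decorated by the insertion $1$.

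Finally I would reorganize the resulting graph sums. A cut edge is either non-separating, in which case cutting it lowers the genus by one and produces two extra markings on a single connected curve, matching the Givental--Teleman formula for $\Omega^{\tau(q)}_{g-1,n+2}(\dots,1,1)$ pushed forward along the gluing map $i$; or it is separating, in which case cutting it splits the graph into two stable graphs of genera $g_1+g_2=g$ with the markings distributed as $S_1\sqcup S_2$, matching $j_*$ of the product $\Omega^{\tau(q)}_{g_1,|S_1|+1}(H^{S_1},1)\otimes\Omega^{\tau(q)}_{g_2,|S_2|+1}(H^{S_2},1)$. Collecting the universal factor $-\tfrac{1}{36}$ in both cases yields the second and third terms. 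The main obstacle is the bookkeeping in this last step: one must check that summing over all stable graphs with a single distinguished differentiated edge --- with the correct automorphism and symmetry factors, and the $\tfrac12$ accounting for the two gluing orders --- reproduces exactly the boundary pushforwards $i_*$ and $j_*$ of the lower correlators, and that the $z\mapsto\psi_j$ identification on the legs is compatible with the CohFT axioms. This is the same combinatorial matching carried out for local $\mathbb{P}^2$ in \cite{lho2018stable}, whose argument I would follow.
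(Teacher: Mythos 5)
Your proposal follows exactly the paper's own route: the paper proves this theorem by expressing $\Omega^{\tau(q)}_{g,n}$ via the Givental--Teleman graph sum, differentiating with the Leibniz rule, and invoking Proposition~\ref{derRV} for the leg and edge factors, deferring the combinatorial matching to the local $\mathbb{P}^2$ argument of \cite{lho2018stable}. Your write-up is a correct and more detailed account of that same argument, including the observation (implicit in the paper) that the vertex and dilaton-leaf factors are $E_2$-free so that only legs and edges contribute under differentiation.
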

\begin{proof}
By Teleman-Givental theorem, ${\Omega^{\tau(q)}}_{g,n}(H^{i_1},...,H^{i_n})$ can be expressed as a graph sum formula. Taking derivative along $\frac{\partial}{\partial E_2(Q)}$, using Leibniz's rule, together with proposition~\ref{derRV}, we prove the theorem.
\end{proof}

\bibliographystyle{plain}

\end{document}